\documentclass[a4paper, 10pt, twoside, notitlepage]{amsart}

\usepackage{amsmath,amscd}
\usepackage{amssymb}
\usepackage{amsthm}
\usepackage{comment}
\usepackage{graphicx, xcolor}

\usepackage{mathrsfs}
\usepackage[linktocpage,ocgcolorlinks, linkcolor=blue]{hyperref}

\usepackage{bm}
\usepackage{bbm}
\usepackage{url}

\usepackage[utf8]{inputenc}
\usepackage{mathtools,amssymb}
\usepackage{esint}
\usepackage{tikz}
\usepackage{dsfont}
\usepackage{relsize}
\usepackage{url}
\urlstyle{same}
\usepackage{xcolor}
\usepackage{graphicx}
\usepackage{mathrsfs}
\usepackage[shortlabels]{enumitem}
\usepackage{lineno}
\usepackage{amsmath}
\usepackage{enumitem}
\usepackage{amsthm} 
\usepackage{verbatim}
\usepackage{dsfont}
\numberwithin{equation}{section}

\allowdisplaybreaks

\mathtoolsset{showonlyrefs}

\graphicspath{{images/}}

\newtheorem{theorem}{Theorem}[section]
\newtheorem{lemma}[theorem]{Lemma}
\newtheorem{definition}[theorem]{Definition}

\newtheorem{proposition}[theorem]{Proposition}

\newtheorem{question}{Question}
\newtheorem{remark}[theorem]{Remark}

\title[Inverse problems for damped nonlocal wave equations]{Optimal Runge approximation for damped nonlocal wave equations and simultaneous determination results}

\author[P. Zimmermann]{Philipp Zimmermann}
\address{Departament de Matem\`atiques i Inform\`atica, Universitat de Barcelona, Barcelona, Spain}
\email{philipp.zimmermann@ub.edu}

\newcommand{\R}{{\mathbb R}}

\newcommand{\N}{{\mathbb N}}

\newcommand{\eps}{\varepsilon}



\newcommand{\schwartz}{\mathscr{S}}

\newcommand{\tempered}{\mathscr{S}^{\prime}}

\newcommand{\fourier}{\mathcal{F}}
\newcommand{\ifourier}{\mathcal{F}^{-1}}

\newcommand{\distr}{\mathscr{D}^{\prime}}

\newcommand{\abs}[1]{\left\lvert #1 \right\rvert}

\newcommand{\weakstar}{\overset{\ast}{\rightharpoonup}}

\begin{document}

	\maketitle
	\begin{abstract}
		The main purpose of this article is to establish new uniqueness results for Calder\'on type inverse problems related to damped nonlocal wave equations. To achieve this goal we extend the theory of very weak solutions to our setting, which allows to deduce an optimal Runge approximation theorem. With this result at our disposal, we can prove simultaneous determination results in the linear and semilinear regime.
		
		\medskip
		
		\noindent{\bf Keywords.} Fractional Laplacian, wave equations, nonlinear PDEs, inverse problems, Runge approximation, very weak solutions.
		
		\noindent{\bf Mathematics Subject Classification (2020)}: Primary 35R30; secondary 26A33, 42B37

	\end{abstract}

	\tableofcontents

	\section{Introduction}
	\label{sec: introduction}

     In recent years, inverse problems for nonlocal partial differential equations (PDEs) of elliptic, parabolic and hyperbolic type have been studied. This line of research was initiated by Ghosh, Salo and Uhlman \cite{GSU20}, in which they have considered the (partial data) Calder\'on problem related to the \emph{fractional Schr\"odinger equation}
	\begin{equation}
		\label{eq: fractional Schroedinger equation}
         \begin{cases}
        ((-\Delta)^s+q)u=0&\text{ in }\Omega,\\
        u =\varphi & \text{ in } \Omega_e,
    \end{cases}
	\end{equation}
	where $\Omega\subset\R^n$ is a bounded domain, $\Omega_e=\R^n\setminus \overline{\Omega}$, $0<s<1$, $q$ is a suitable potential and $(-\Delta)^s$ is the \emph{fractional Laplacian} which is the operator with Fourier symbol $|\xi|^{2s}$. In this problem one asks whether the knowledge of the \emph{(partial) Dirichlet to Neumann (DN) map}
	\begin{equation}
    \label{eq: partial DN map Schroeding}
		\Lambda_q \varphi= (-\Delta)^s u_\varphi|_{W_2}, \quad \varphi\in C_c^{\infty}(W_1),
	\end{equation}
    where $W_1,W_2\subset\Omega_e$ are given measurement sets (i.e.~nonempty open sets) and $u_\varphi$ denotes the unique solution to \eqref{eq: fractional Schroedinger equation}, uniquely determines the potential $q$. The overall strategy to establish unique determination results for the above Calder\'on problem is as follows (see \cite{GSU20,RS17,RZ-unbounded}):
    \begin{enumerate}[(i)]
        \item\label{item: integral identity} \emph{Integral identity:} Assume that the potentials $q_j$ are suitably regular, then one can write
        \begin{equation}
        \label{eq: integral identity schroeding}
            \langle (\Lambda_{q_1}-\Lambda_{q_2})\varphi_1,\varphi_2\rangle=\int_{\Omega} (q_1-q_2)(u_{\varphi_1}-\varphi_1),(u_{\varphi_2}-\varphi_2)\,dx,
        \end{equation}
        when the right hand side is interpreted accordingly.
        \item\label{item: Runge approx} Establish one of the following \emph{Runge approximation theorems}:
        \begin{enumerate}[(I)]
            \item\label{L2 Runge} $\mathcal{R}_W =\{u_f|_{\Omega}\,;\,f\in C_c^{\infty}(W)\}$ is dense in $L^2(\Omega)$ (see \cite{GSU20} for $q\in L^{\infty}(\Omega)$).
            \item\label{Hs Runge} $\mathscr{R}_W =\{u_f-f\,;\,f\in C_c^{\infty}(W)\}$ is dense in $\widetilde{H}^s(\Omega)$ (see \cite{RS17} for Sobolev multipliers $q$ or \cite{RZ-unbounded} for local, bounded bilinear forms).
        \end{enumerate}
        \item\label{item: Conclusion} If the potentials $q_j$ for $j=1,2$ have suitable continuity properties, then $\Lambda_{q_1}=\Lambda_{q_2}$ together with \ref{item: Runge approx} ensure that there holds $q_1=q_2$ in $\Omega$.
    \end{enumerate}
    In \ref{Hs Runge}, the space $\widetilde{H}^s(\Omega)$ is the closure of $C_c^{\infty}(\Omega)$ in the energy space
    \[
    H^s(\R^n)=\{u\in\tempered(\R^n)\,;\,\|u\|_{H^s(\R^n)}\vcentcolon = \|\langle D\rangle^s u\|_{L^2(\R^n)}<\infty\},
    \]
    where $\langle D\rangle^s$ is the Bessel potential operator. 
    Observe the similarity of the above strategy to the one of \cite{SU87} for showing unique determination for the classical Calder\'on problem, where instead of the Runge approximation theorem suitable geometric optics solutions are used. Moreover, the Runge approximation \ref{item: Runge approx} relies on a Hahn--Banach argument and the \emph{unique continuation property (UCP)} of the fractional Laplacian $(-\Delta)^s$. For more results on Calder\'on problems for elliptic nonlocal PDEs, we refer the interested reader to \cite{GLX,cekic2020calderon,CLL2017simultaneously,LL2020inverse,LL2022inverse, LZ2023unique,KLZ-2022,KLW2022,LRZ2022calder,Semilinear-nonlocal-wave-paper,LLU2023calder,CGRU2023reduction,LLU2023calder,RZ-unbounded,RZ2022LowReg,CRTZ-2022,LZ2024uniqueness,feizmohammadi2021fractional,feizmohammadi2021fractional_closed,FKU24,Trans-anisotropic-LNZ} and the references therein. 

    \subsection{Mathematical model and main results}
    \label{subsec: mathematical model and main results}

    Recently, the above approach for solving elliptic nonlocal inverse problems has also been adapted to deduce uniqueness results for the Calder\'on problem of nonlocal hyperbolic equations. Let us next describe some of these results in more detail and for this purpose consider the problem
     \begin{equation}
    \label{eq: discussion existing results}
        \begin{cases}
         \partial_t^2u+\lambda (-\Delta)^s \partial_t u+(-\Delta)^s u +f(u)= 0 & \text{ in } \Omega_T,\\
        u =\varphi & \text{ on } (\Omega_e)_T,\\
          u(0) = 0,\, \partial_{t}u(0) = 0 & \text{ on } \Omega,
    \end{cases}
    \end{equation}
    where $\lambda\in\R$ and $f\colon \Omega\times\R\to\R$ is a possibly nonlinear function. If the problem  \eqref{eq: discussion existing results} is well-posed in the energy class $H^s(\R^n)$, then for any two given measurement sets $W_1,W_2\subset\Omega_e$ we may introduce the DN map $\Lambda^{\lambda}_{f}$ via 
    \[
        \Lambda^{\lambda}_{f}\varphi=\left.(\lambda (-\Delta)^s\partial_t u_\varphi+(-\Delta)^s u_\varphi)\right|_{(W_2)_T},
    \] 
    whenever $\varphi$ is supported in $(W_1)_T$ and $u_\varphi$ is the solution of \eqref{eq: discussion existing results}. The \emph{Calder\'on problem} for \eqref{eq: discussion existing results} reads as follows:
    \begin{question}
    \label{question: calderon discussion}
        Does the DN map $\Lambda^{\lambda}_{f}$ uniquely determine the function $f$?
    \end{question}
    A suitable class of nonlinearities are the so-called weak nonlinearities, which are defined next.
    \begin{definition}\label{main assumptions on nonlinearities}
		We call a Carath\'eodory function $f\colon \Omega\times \R\to\R$ \emph{weak nonlinearity}, if it satisfies the following conditions:
		\begin{enumerate}[(i)]
			\item\label{prop f} $f$ has partial derivative $\partial_{\tau}f$, which is a Carath\'eodory function, and there exists $a\in L^p(\Omega)$ such that
			\begin{equation}
				\label{eq: bound on derivative}
				\left|\partial_\tau f(x,\tau)\right|\lesssim a(x)+|\tau|^r
			\end{equation}
			for all $\tau\in\R$ and a.e. $x\in\Omega$. Here the exponents $p$ and $r$ satisfy the restrictions 
            \begin{equation}
            \label{eq: restrictions on p}
                \begin{cases}
			n/s\leq p\leq \infty, &\, \text{if }\, 2s< n,\\
			2<p\leq \infty,  &\, \text{if }\, 2s= n,\\
			2\leq p\leq \infty, &\, \text{if }\, 2s\geq n.
		\end{cases}
            \end{equation}
			and
			\begin{equation}
				\label{eq: cond on r}
				\begin{cases}
					0\leq r<\infty, &\, \text{if }\, 2s\geq n,\\
					0\leq r\leq \frac{2s}{n-2s}, &\, \text{if }\, 2s< n,
				\end{cases}
			\end{equation}
			respectively. Moreover, $f$ fulfills the integrability condition $f(\cdot,0)\in L^2(\Omega)$.
			\item\label{prop F} The function $F\colon \Omega\times\R\to\R$ defined via
			\[
			F(x,\tau)=\int_0^\tau f(x,\rho)\,d\rho
			\]
			satisfies $F(x,\tau)\geq 0$ for all $\tau\in\R$ and $x\in\Omega$.
		\end{enumerate}
	\end{definition}
	Let us note that $f(x,\tau)=q(x)|\tau|^r\tau$ with $r$ satisfying \eqref{eq: cond on r} and $0\leq q\in L^{\infty}(\Omega)$ is a weak nonlinearity. Using the above notions, we can now discuss some of the existing results.
    \begin{enumerate}[(a)]
        \item\label{item: basic nonlocal wave} The article \cite{KLW2022} gives a positive answer for $\lambda=0, f(x,\tau)=q(x)\tau$ with $q\in L^{\infty}(\Omega)$. Their proof relied on the observation that the related nonlocal wave equation \eqref{eq: discussion existing results} satisfies an $L^2(\Omega_T)$ Runge approximation theorem. 
        \item\label{item: viscous nonlocal wave} The work \cite{zimmermann2024calderon} deals on the one hand with the linear case $\lambda=1$, $f(x,\tau)=q(x)\tau$ with $q\in L^{\infty}(0,T;L^p(\Omega))$, where $p$ satisfies the restrictions \eqref{eq: restrictions on p}, and $q$ is weakly continuous in $t$ and on the other hand with the nonlinear case $\lambda=1$ and $f$ is a $r+1$ homogeneous, weak nonlinearity. The uniqueness proofs use substantially that due to presence of the viscosity term $(-\Delta)^s\partial_t$ solutions $u$ to \eqref{eq: discussion existing results} satisfy $\partial_t u\in L^2(0,T;H^s(\R^n))$ and as a consequence the linearized equations have the Runge approximation property in $L^2(0,T;\widetilde{H}^s(\Omega))$.
        \item\label{item: semilinear nonlocal wave} In \cite{Semilinear-nonlocal-wave-eq}, uniqueness is proved in the case $\lambda=0$ and $f$ satisfies the same properties as in \ref{item: viscous nonlocal wave}, but with the additional restriction $r\leq 1$. This article only uses an $L^2(\Omega_T)$ Runge approximation result for the linearized nonlocal wave equation.
        \item\label{item: optimal runge} By establishing a theory for very weak solutions of linear nonlocal wave equations with $\lambda=0$, the authors of \cite{Optimal-Runge-nonlocal-wave} could deduce an optimal $L^2(0,T;\widetilde{H}^s(\Omega))$ Runge approximation theorem for these equations. This allowed to extend the results in \ref{item: semilinear nonlocal wave} to the cases $r>1$ and additionally showed that one can recover any linear perturbation $q\in L^p(\Omega)$ with $p$ satisfying the restrictions \eqref{eq: restrictions on p}. Furthermore, by this improved Runge approximation theorem the authors could also treat the case of serially or asymptotically polyhomogeneous nonlinearities (see \cite[Theorem 1.5]{Optimal-Runge-nonlocal-wave}). 
    \end{enumerate}
    In this context, let us also mention the recent article \cite{fu2024wellposednessinverseproblemsnonlocal} which deals with the Calder\'on problem for a third order semilinear, nonlocal, viscous wave equation. 

    The goal of this paper us to present an extension of the models described in \ref{item: viscous nonlocal wave} and \ref{item: semilinear nonlocal wave}, which we discuss next. Let $0<s<1$ and suppose that we have given coefficients $(\gamma,q)\in C^{0,\alpha}(\R^n)\times L^p(\Omega)$, where $0<s<\alpha\leq 1$ and $1\leq p\leq \infty$ satisfies the restrictions in  \eqref{eq: restrictions on p}.
    Then we define the following \emph{damped, nonlocal wave operator}
    \begin{equation}
    \label{eq: damped, nonlocal wave operator}
        L_{\gamma}\vcentcolon = \partial_t^2+\gamma\partial_t +(-\Delta)^s
    \end{equation}
    and consider the problem
    \begin{equation}
    \label{eq: wave problem}
        \begin{cases}
         L_{\gamma}u +f(u)= F & \text{ in } \Omega_T,\\
        u =\varphi & \text{ on } (\Omega_e)_T,\\
          u(0) = u_0,\, \partial_{t}u(0) = u_1 & \text{ on } \Omega,
    \end{cases}
    \end{equation}
    where $f$ is a weak nonlinearity or $f(u)=q(x)u$. In fact, this is a possibly nonlinear generalization of the model (G2) with $s_1=0$ in \cite[Section 1.1]{zimmermann2024calderon}. By \cite[Proposition 3.7]{Semilinear-nonlocal-wave-eq} (see Section \ref{subsec: weak solutions} for the linear case), we know that the problem \eqref{eq: wave problem} is well-posed, whenever the source $F$, exterior condition $\varphi$ and initial conditions $u_0,u_1$ are sufficiently regular. Thus, we can introduce the related (partial) DN map via
    \begin{equation}
    \label{eq: formal DN map}
        \Lambda_{\gamma,f}\varphi\vcentcolon = \left.(-\Delta)^s u_{\varphi}\right|_{(W_2)_T},
    \end{equation}
    where $W_1,W_2\subset \Omega_e$ are some measurement sets, $\varphi$ is supported in $(W_1)_T$ and $u_\varphi$ is the unique solution of \eqref{eq: wave problem} with $u_0=u_1=0$. Then we ask the following question: 

    \begin{question}
    \label{question: Caldeorn problem of this work}
        Does the partial DN map $\Lambda_{\gamma,f}$ uniquely determine the damping coefficient $\gamma$ and the function $f$?
    \end{question}

    In this work we establish the following affirmative answers to this question, whereas the first result discusses the linear case and the second one the semilinear perturbations.
   
    \begin{theorem}[Uniqueness for linear perturbations]
    \label{thm: uniqueness linear}
        Let $\Omega \subset\R^n$ be a bounded Lipschitz domain, $T>0$, $0<s<\alpha\leq 1$ and suppose that $1\leq p\leq \infty$ satisfies \eqref{eq: restrictions on p}. Assume that for $j=1,2$ we have given coefficients $(\gamma_j,q_j)\in C^{0,\alpha}(\R^n)\times L^{p}(\Omega)$ and let $\Lambda_{\gamma_j,q_j}$ be the DN map associated to the problem 
        \begin{equation}
        \label{eq: PDEs uniqueness theorem}
            \begin{cases}
         
         (L_{\gamma_j}+q_j)u = 0 & \text{ in } \Omega_T,\\
        u =\varphi & \text{ on } (\Omega_e)_T,\\
          u(0) = 0,\, \partial_{t}u(0) = 0 & \text{ on } \Omega
    \end{cases}
        \end{equation}
        for $j=1,2$. If $W_1,W_2\subset\Omega_e$ are two measurement sets such that
        \begin{equation}
        \label{eq: equality of DN maps}
            \left.\Lambda_{\gamma_1,q_1}\varphi\right|_{(W_2)_T}=\left.\Lambda_{\gamma_2,q_2}\varphi\right|_{(W_2)_T}
        \end{equation}
        for all $\varphi\in C_c^{\infty}((W_1)_T)$, then there holds
        \begin{equation}
        \label{eq: equal coefficients}
            \gamma_1=\gamma_2\text{ and }q_1=q_2 \text{ in }\Omega.
        \end{equation}
    \end{theorem}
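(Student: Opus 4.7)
The plan is to follow the three-step strategy (integral identity, Runge approximation, separation of coefficients) outlined in the introduction, with the main technical input being an optimal $L^2(0,T;\widetilde{H}^s(\Omega))$-Runge approximation theorem for the damped operator $L_{\gamma}+q$ that the paper presumably builds via very weak solutions. First I would set up the integral identity. Let $u_1$ solve \eqref{eq: PDEs uniqueness theorem} with $(\gamma_1,q_1)$ and exterior data $\varphi_1\in C_c^{\infty}((W_1)_T)$, and let $v_2$ solve the adjoint backward problem
\[
\partial_t^2 v_2-\gamma_2\partial_t v_2+(-\Delta)^s v_2+q_2 v_2=0 \text{ in }\Omega_T,\quad v_2=\varphi_2 \text{ on }(\Omega_e)_T,\quad v_2(T)=\partial_t v_2(T)=0,
\]
with $\varphi_2\in C_c^{\infty}((W_2)_T)$. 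Pairing $(L_{\gamma_1}+q_1)u_1=0$ with $v_2$, integrating by parts in $t$ (boundary terms at $t=0,T$ vanish by the initial/final conditions) and using the self-adjointness of $(-\Delta)^s$ on $\R^n$, the exterior contribution reduces via the duality between a DN map and its adjoint to a multiple of $\langle(\Lambda_{\gamma_1,q_1}-\Lambda_{\gamma_2,q_2})\varphi_1,\varphi_2\rangle$, which vanishes by \eqref{eq: equality of DN maps}; combined with $v_2$'s equation one arrives at
\[
\int_0^T\!\!\int_\Omega\bigl((\gamma_1-\gamma_2)(\partial_t u_1)v_2+(q_1-q_2)u_1 v_2\bigr)\,dx\,dt=0.
\]

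Next I would invoke the optimal Runge approximation: $\{u_\varphi-\varphi\,:\,\varphi\in C_c^{\infty}((W_j)_T)\}$ is dense in a space that controls both $u$ and $\partial_t u$ (morally $H^1(0,T;\widetilde{H}^{-s}(\Omega))\cap L^2(0,T;\widetilde{H}^s(\Omega))$), both for the forward problem and for its backward adjoint. Following the template of \cite{Optimal-Runge-nonlocal-wave}, this reduces by Hahn--Banach to showing that any annihilator in the dual space must vanish; such an annihilator is realized as a very weak solution of the dual damped wave equation with source supported in $\Omega_T$, and the UCP of $(-\Delta)^s$ applied in the exterior complement of the measurement set forces it to be zero. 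The hypothesis $\gamma\in C^{0,\alpha}$ with $\alpha>s$ is exactly what makes multiplication by $\gamma$ bounded on $H^s(\R^n)$ and compatible with the $H^{-s}$-valued distributional framework needed to treat the damping term in the very weak formulation.

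Third, by density the integral identity extends to arbitrary tensor products $w_j(x,t)=\psi_j(x)\chi_j(t)$ with $\psi_j\in C_c^{\infty}(\Omega)$ and $\chi_j\in C_c^{\infty}((0,T))$, giving
\[
A(\psi_1,\psi_2)\int_0^T\chi_1'(t)\chi_2(t)\,dt+B(\psi_1,\psi_2)\int_0^T\chi_1(t)\chi_2(t)\,dt=0,
\]
where $A(\psi_1,\psi_2)=\int_\Omega(\gamma_1-\gamma_2)\psi_1\psi_2\,dx$ and $B(\psi_1,\psi_2)=\int_\Omega(q_1-q_2)\psi_1\psi_2\,dx$. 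Taking $\chi_1=\chi_2=\chi\in C_c^{\infty}((0,T))$ kills the first time integral (since $\int_0^T\chi'\chi\,dt=\tfrac{1}{2}\int_0^T(\chi^2)'\,dt=0$) while $\int_0^T\chi^2\,dt>0$, so $B\equiv 0$ and hence $q_1=q_2$; with $B=0$ in hand, taking $\chi_2=\chi_1'$ for any nonconstant $\chi_1\in C_c^{\infty}((0,T))$ gives $\int_0^T\chi_1'\chi_2\,dt=\|\chi_1'\|_{L^2}^2>0$, so $A\equiv 0$ and hence $\gamma_1=\gamma_2$.

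The main obstacle is step two: the very weak solution theory and optimal Runge approximation for the damped operator must be developed carefully since the adjoint carries the sign-reversed damping $-\gamma\partial_t$ and multiplying by a merely Hölder $\gamma$ within the $H^{-s}$ dual pairing is delicate (which is precisely what the quantitative Hölder hypothesis $\alpha>s$ accommodates). Once this is in place, the separation of $\gamma$ and $q$ in step three is the short time-frequency argument above.
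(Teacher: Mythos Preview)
Your three-step outline matches the paper's strategy, and your separation argument in step three is a clean alternative to what the paper does. But there is a genuine gap in step two: you assert that the Runge set is dense in ``a space that controls both $u$ and $\partial_t u$ (morally $H^1(0,T;\widetilde{H}^{-s}(\Omega))\cap L^2(0,T;\widetilde{H}^s(\Omega))$)''. The paper does \emph{not} prove this, and it is not clear that a Hahn--Banach argument in the dual of such an intersection space would close with the very weak solution theory developed there. What the paper actually establishes (Theorem~\ref{thm: Runge approx}) is density of $\{u_\varphi-\varphi\}$ only in $L^2(0,T;\widetilde{H}^s(\Omega))$, using very weak solutions with sources in $L^2(0,T;H^{-s}(\Omega))$. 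Your step three, as written, then requires $\partial_t(u_1^k-\varphi_1^k)\to\psi_1\chi_1'$, which this weaker density does not give directly.

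The paper's workaround is Lemma~\ref{lemma: convergence of time derivatives}: rather than upgrading the Runge density, one integrates by parts in $t$ \emph{before} passing to the limit, moving $\partial_t$ onto the second factor; the boundary terms at $t=0,T$ vanish because $u_k-\varphi_k$ has zero initial data and the target functions are chosen with suitable vanishing at the endpoints. With this device, only $L^2(0,T;\widetilde{H}^s(\Omega))$-convergence is needed. Your tensor-product test functions $\psi_j(x)\chi_j(t)$ with $\chi_j\in C_c^\infty((0,T))$ are perfectly compatible with this trick (the endpoint terms drop automatically), so your step three survives once you route the $\partial_t$ via integration by parts rather than through a stronger Runge theorem. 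The paper's own separation is slightly different in flavor: it first takes $\Phi_1$ time-independent (killing the $\gamma$-term outright) to get $q_1=q_2$, then takes $\Phi_1(x,t)=\int_0^t\eta(x,\tau)\,d\tau$ so that $\partial_t\Phi_1=\eta$ to recover $\gamma_1=\gamma_2$; your $\int\chi'\chi=0$ / $\int(\chi')^2>0$ argument achieves the same decoupling and is arguably more symmetric.
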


    \begin{theorem}[Uniqueness for semilinear perturbations]
    \label{thm: uniqueness semilinear}
        Let $\Omega \subset\R^n$ be a bounded Lipschitz domain, $T>0$ and $0<s<\alpha\leq 1$. Assume that for $j=1,2$ we have given coefficients $\gamma_j\in C^{0,\alpha}(\R^n)$ and $r+1$ homogeneous, weak nonlinearities $f_j$, where $r>0$ satisfies \eqref{eq: cond on r}. Let $\Lambda_{\gamma_j,f_j}$ be the DN map associated to the problem 
        \begin{equation}
        \label{eq: PDEs uniqueness theorem}
            \begin{cases}
         L_{\gamma_j}u +f_j(u) = 0 & \text{ in } \Omega_T,\\
        u =\varphi & \text{ on } (\Omega_e)_T,\\
          u(0) = 0,\, \partial_{t}u(0) = 0 & \text{ on } \Omega
    \end{cases}
        \end{equation}
        for $j=1,2$. If $W_1,W_2\subset\Omega_e$ are two measurement sets such that
        \begin{equation}
        \label{eq: equality of DN maps semilinear}
            \left.\Lambda_{\gamma_1,f_1}\varphi\right|_{(W_2)_T}=\left.\Lambda_{\gamma_2,f_2}\varphi\right|_{(W_2)_T}
        \end{equation}
        for all $\varphi\in C_c^{\infty}((W_1)_T)$, then there holds
        \begin{equation}
        \label{eq: equal coefficients}
            \gamma_1=\gamma_2\text{ in }\Omega\text{ and }f_1=f_2 \text{ in }\Omega\times \R.
        \end{equation}
    \end{theorem}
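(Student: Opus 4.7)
The plan is to follow the three-step strategy (i)--(iii) from the introduction, splitting the argument into first recovering the damping coefficient $\gamma$ and then the nonlinearity $f$. The key structural observation is that since each $f_j$ is $(r+1)$-homogeneous with $r>0$, its partial derivative satisfies $\partial_\tau f_j(x,0)=0$, so the first-order linearization of \eqref{eq: PDEs uniqueness theorem} at the trivial solution is the purely linear problem $L_{\gamma_j}v=0$ with vanishing potential. To extract $\gamma$, I would test the DN map equality \eqref{eq: equality of DN maps semilinear} on rescaled data $\eps\psi$ with $\psi\in C_c^\infty((W_1)_T)$ and $\eps>0$ small. Writing the forward solution as $u_j^\eps=\eps w_j^\eps$ and using homogeneity $f_j(\eps w)=\eps^{r+1}f_j(w)$, the rescaled function satisfies $L_{\gamma_j}w_j^\eps+\eps^r f_j(w_j^\eps)=0$ with exterior data $\psi$. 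Well-posedness together with continuous dependence estimates give $w_j^\eps\to v_j$ in the energy space as $\eps\to 0$, where $v_j$ solves the linear problem $L_{\gamma_j}v_j=0$ with data $\psi$. Dividing \eqref{eq: equality of DN maps semilinear} by $\eps$ and sending $\eps\to 0$ yields $(-\Delta)^s v_1|_{(W_2)_T}=(-\Delta)^s v_2|_{(W_2)_T}$ for all such $\psi$; this is precisely the equality of linear DN maps with $q_1=q_2=0$, so Theorem \ref{thm: uniqueness linear} forces $\gamma_1=\gamma_2=:\gamma$.

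With this common damping in hand, I would derive the integral identity
\[
\langle (\Lambda_{\gamma,f_1}-\Lambda_{\gamma,f_2})\varphi_1,\varphi_2\rangle=\int_{\Omega_T}\bigl(f_1(u_1^{\varphi_1})-f_2(u_2^{\varphi_1})\bigr)v_2^{\varphi_2}\,dx\,dt,
\]
where $u_j^{\varphi_1}$ solves the forward nonlinear problem with data $\varphi_1$ and $v_2^{\varphi_2}$ solves the time-reversed adjoint problem $L_\gamma^* v=0$ with data $\varphi_2$. This should be established via the weak formulation, appealing to the very weak solution framework built in the paper whenever the regularity of $u_j^{\varphi_1}-\varphi_1$ is below the energy class. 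Rescaling $\varphi_1=\eps\psi_1$ again, using $(r+1)$-homogeneity to factor $\eps^{r+1}$ out of the nonlinear term, dividing by $\eps^{r+1}$, and passing to the limit reduces the identity to
\[
\int_{\Omega_T}\bigl(f_1(v_1^{\psi_1})-f_2(v_1^{\psi_1})\bigr)v_2^{\varphi_2}\,dx\,dt=0
\]
for every linear solution pair $(v_1^{\psi_1},v_2^{\varphi_2})$ with data in $(W_1)_T$ and $(W_2)_T$ respectively.

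Finally, I would invoke the optimal $L^2(0,T;\widetilde{H}^s(\Omega))$ Runge approximation theorem for $L_\gamma$ (established earlier in the paper) twice: first replacing $v_1^{\psi_1}$ by an arbitrary element $\phi$ of a dense subset of $L^2(0,T;\widetilde{H}^s(\Omega))$, and then using density of $v_2^{\varphi_2}$ to deduce $f_1(\cdot,\phi)=f_2(\cdot,\phi)$ a.e.~in $\Omega_T$. Extending this equality to all $\phi\in L^2(\Omega_T)$ by continuity of the Nemytskii operators (guaranteed by Definition \ref{main assumptions on nonlinearities}) and specializing to constant profiles $\phi\equiv\tau$ yields $f_1(x,\tau)=f_2(x,\tau)$ for a.e.~$x\in\Omega$ and all $\tau\in\R$. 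The main obstacle is the rigorous justification of the two $\eps\to 0$ limit passages: one must control $f_j(w_j^\eps)\to f_j(v_1^{\psi_1})$ in the correct duality pairing with $v_2^{\varphi_2}$, which couples the integrability properties of $f_j$ prescribed by \eqref{eq: bound on derivative}--\eqref{eq: cond on r} to the space supplied by the Runge theorem. It is exactly the optimality of the Runge approximation, providing density in $L^2(0,T;\widetilde{H}^s(\Omega))$ rather than merely in $L^2(\Omega_T)$, that allows the full admissible range of $r$ in \eqref{eq: cond on r} to be covered, in analogy with the undamped treatment in \cite{Optimal-Runge-nonlocal-wave}.
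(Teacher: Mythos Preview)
Your approach is correct but follows a genuinely different route than the paper. The key structural difference is that the paper, as its very first step, uses the UCP of $(-\Delta)^s$ together with \eqref{eq: equality of DN maps semilinear} to conclude directly that the \emph{nonlinear} solutions coincide: $u_\eps^{(1)}=u_\eps^{(2)}=:u_\eps$ for every exterior datum $\eps\eta$. Subtracting the two nonlinear PDEs then yields the pointwise identity $(\gamma_1-\gamma_2)\partial_t u_\eps=f_2(u_\eps)-f_1(u_\eps)$ in $\Omega_T$, from which both unknowns are extracted by asymptotics in $\eps$: dividing by $\eps$ and letting $\eps\to 0$ kills the right-hand side (of order $\eps^{r}$) and leaves $(\gamma_1-\gamma_2)\partial_t v_1=0$, after which a direct Runge argument (not an appeal to Theorem~\ref{thm: uniqueness linear}) recovers $\gamma$; the identity then collapses to $f_1(u_\eps)=f_2(u_\eps)$, and dividing by $\eps^{r+1}$ gives $f_1(v)=f_2(v)$ for linear solutions $v$, with the final passage to all $(x,\tau)$ deferred to \cite{Optimal-Runge-nonlocal-wave}. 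Your route---linearizing the DN map and invoking Theorem~\ref{thm: uniqueness linear} as a black box for $\gamma$, followed by a separately derived nonlinear integral identity and a double Runge/Nemytskii argument for $f$---is more modular and closer to the standard elliptic template, but it requires justifying convergence of the DN maps in the right topology and checking the nonlinear integral identity. The paper's approach is shorter because the single observation ``nonlinear solutions agree by UCP'' replaces both your DN-map linearization step and your integral identity, reducing everything to a pointwise statement from the outset.
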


    \begin{remark}
        For simplicity we restrict our attention to homogeneous nonlinearities $f$, but the unique determination remains valid in some polyhomogeneous cases as described in \cite{Optimal-Runge-nonlocal-wave} for $\gamma=0$.
    \end{remark}


    \subsection{Organization of the article} The rest of this article is structured as follows. In Section \ref{sec: Very weak solutions to damped, nonlocal wave equations}, we establish the existence of unique weak and very weak solutions to damped nonlocal wave equations. In Section \ref{sec: inverse problem} we then move on to the inverse problem part of this work. First, in Section \ref{sec: Runge approx} we establish the optimal Runge approximation theorem. Afterwards, in Section \ref{sec: integral identity} we prove a suitable integral identity that allows us to recover simultaneously the damping coefficient $\gamma$ and potential $q$ in Section \ref{sec: linear uniqueness}. Finally, Section \ref{sec: semilinear uniqueness} contains the proof of the simultaneous determination of the damping coefficient $\gamma$ and the homogeneous nonlinearity $f$.

	\section{Weak and very weak solutions to damped, nonlocal wave equations}
	\label{sec: Very weak solutions to damped, nonlocal wave equations}

    The main purpose of this section is to show existence of unique weak and very weak solutions to \emph{damped, nonlocal wave equations (DNWEQ)}
    \begin{equation}
    \label{eq: damped nonlocal wave equations well-posedness}
        \begin{cases}
         (L_{\gamma}+q)u = F & \text{ in } \Omega_T,\\
        u =\varphi & \text{ on } (\Omega_e)_T,\\
          u(0) = u_0,\, \partial_{t}u(0) = u_1 & \text{ on } \Omega,
    \end{cases}
    \end{equation}
    where $L_\gamma$ is given by \eqref{eq: damped, nonlocal wave operator} and only the case $\varphi=0$ is considered for very weak solutions. 

    \subsection{Weak solutions}
    \label{subsec: weak solutions}

    This section deals with the well-posedness of \eqref{eq: damped nonlocal wave equations well-posedness} for regular sources, exterior conditions and initial data. We also prove well-posedness for the case when instead of initial values the values at $t=T$ are specified, which will be needed for the development of the theory of very weak solutions.

    \begin{theorem}[Weak solutions to homogeneous DNWEQ]
    \label{thm: weak sol to hom DNWEQ}
       Let $\Omega \subset\R^n$ be a bounded Lipschitz domain, $T>0$, $0<s< 1$ and suppose that $1\leq p\leq \infty$ satisfies \eqref{eq: restrictions on p}. Assume that we have given coefficients $(\gamma,q)\in L^{\infty}(\Omega)\times L^{p}(\Omega)$. Then for any $F\in L^2(0,T;\widetilde{L}^2(\Omega))$\footnote{Here and below we set $\widetilde{L}^2(\Omega)\vcentcolon =\widetilde{H}^0(\Omega)$.} and initial conditions $(u_0,u_1)\in \widetilde{H}^s(\Omega)\times\widetilde{L}^2(\Omega)$, there exists a unique weak solution $u\in C([0,T];\widetilde{H}^s(\Omega))\cap C^1([0,T];\widetilde{L}^2(\Omega))$ of 
       \begin{equation}
    \label{eq: damped nonlocal wave equations well-posedness weak sol}
        \begin{cases}
         (L_{\gamma}+q)u = F & \text{ in } \Omega_T,\\
        u =0 & \text{ on } (\Omega_e)_T,\\
          u(0) = u_0,\, \partial_{t}u(0) = u_1 & \text{ on } \Omega,
    \end{cases}
    \end{equation}
    which means that $(u(0),\partial_tu(0))=(u_0,u_1)$ in $\widetilde{H}^s(\Omega)\times \widetilde{L}^2(\Omega)$ and there holds
    \begin{equation}
    \label{eq: weak formulation damped nonlocal wave eq}
    \begin{split}
        &\frac{d}{dt} \langle \partial_t u,v\rangle_{L^2(\Omega)}+\langle \gamma\partial_t u,v\rangle_{L^2(\Omega)}+\langle (-\Delta)^{s/2}u,(-\Delta)^{s/2}v\rangle_{L^2(\R^n)}+\langle qu,v\rangle_{L^2(\Omega)}\\
        &=\langle F,v\rangle_{L^2(\Omega)}
    \end{split}
    \end{equation}
      for all $v\in \widetilde{H}^s(\Omega)$ in the sense of distributions on $(0,T)$. Moreover, the unique solution $u$ obeys the energy identity
        \begin{equation}
        \label{eq: energy identity}
            \begin{split}
         &\|\partial_t u(t)\|_{L^2(\Omega)}^2+\|(-\Delta)^{s/2}u(t)\|_{L^2(\R^n)}^2+2 \int_0^t \langle \gamma\partial_t u(\tau)+qu(\tau),\partial_t u(\tau)\rangle_{L^2(\Omega)}\,d\tau\\
         &=\|(-\Delta)^{s/2}u_0\|_{L^2(\R^n)}^2+\|u_1\|_{L^2(\Omega)}^2+2 \int_0^t\langle F(\tau),\partial_tu (\tau)\rangle_{L^2(\Omega)}\,d\tau
    \end{split}
        \end{equation}
        which implies
        \begin{equation}
        \label{eq: energy estimate}
        \begin{split}
            &\|\partial_t u(t)\|_{L^2(\Omega)}+\|(-\Delta)^{s/2}u(t)\|_{L^2(\R^n)}\\
            &\leq C(\|u_1\|_{L^2(\Omega)}+\|(-\Delta)^{s/2}u_0\|_{L^2(\R^n)}+\|F\|_{L^2(0,t;L^2(\Omega))})
        \end{split}
        \end{equation}
        for all $0\leq t\leq T$ and some $C>0$ only depending on $\|q\|_{L^p(\Omega)}$, $\|\gamma\|_{L^{{\infty}(\Omega)}}$ and $T>0$. 
    \end{theorem}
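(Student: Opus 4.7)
The plan is the classical Galerkin method adapted to the nonlocal setting, with the only nontrivial ingredient being the control of the potential term $qu$ through fractional Sobolev embedding for $p$ satisfying \eqref{eq: restrictions on p}. First I would fix an orthonormal basis $\{e_k\}_{k\in\N}$ of $\widetilde{L}^2(\Omega)$ consisting of eigenfunctions of the fractional Dirichlet Laplacian on $\Omega$ (the operator on $\widetilde{H}^s(\Omega)$ associated with the bilinear form $\langle (-\Delta)^{s/2}u,(-\Delta)^{s/2}v\rangle_{L^2(\R^n)}$). Setting $V_m=\mathrm{span}\{e_1,\dots,e_m\}$, I search for
\[
u_m(t)=\sum_{k=1}^m c_k^{(m)}(t)\,e_k
\]
satisfying \eqref{eq: weak formulation damped nonlocal wave eq} tested against each $e_k$ for $k\leq m$, together with $u_m(0)=P_m u_0$ and $\partial_t u_m(0)=P_m u_1$, where $P_m$ is the orthogonal projection onto $V_m$. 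This is a linear second order ODE system in $\R^m$ with $L^2$ right-hand side, hence admits a unique solution with $c_k^{(m)}\in H^2(0,T)$.

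The second step is to derive uniform energy estimates. Testing the Galerkin equation against $\partial_t u_m$ (which is legal for $u_m$) yields exactly the identity \eqref{eq: energy identity} with $u$ replaced by $u_m$. The term $\langle \gamma \partial_t u_m,\partial_t u_m\rangle_{L^2(\Omega)}$ is bounded by $\|\gamma\|_{L^\infty(\Omega)}\|\partial_t u_m\|_{L^2(\Omega)}^2$. The potentially delicate term is $\langle qu_m,\partial_t u_m\rangle_{L^2(\Omega)}$: using H\"older and the Sobolev embedding $\widetilde{H}^s(\Omega)\hookrightarrow L^{2n/(n-2s)}(\Omega)$ (resp. into all $L^r$ for $2s\geq n$), the restrictions \eqref{eq: restrictions on p} guarantee
\[
\|qu_m\|_{L^2(\Omega)}\leq \|q\|_{L^p(\Omega)}\|u_m\|_{L^{2p/(p-2)}(\Omega)}\lesssim \|q\|_{L^p(\Omega)}\|(-\Delta)^{s/2}u_m\|_{L^2(\R^n)},
\]
so this term is absorbed into the energy functional. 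Combining with Cauchy--Schwarz on the source term and Gronwall's inequality then produces the uniform bound \eqref{eq: energy estimate} for $u_m$.

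Next I pass to the limit $m\to\infty$. The uniform bounds give (along a subsequence) $u_m\weakstar u$ in $L^\infty(0,T;\widetilde{H}^s(\Omega))$ and $\partial_t u_m\weakstar \partial_t u$ in $L^\infty(0,T;\widetilde{L}^2(\Omega))$, and from the equation $\partial_t^2 u_m$ is bounded in $L^2(0,T;H^{-s}(\Omega))$, so by the Aubin--Lions lemma one obtains strong convergence $u_m\to u$ in $L^2(0,T;\widetilde{L}^2(\Omega))$ (and $u_m(t)\to u(t)$ in $\widetilde{L}^2(\Omega)$ for each $t$ by interpolation). Passing to the limit in the weak formulation against fixed $v\in V_{m_0}$ and then using density of $\bigcup_m V_m$ in $\widetilde{H}^s(\Omega)$ yields \eqref{eq: weak formulation damped nonlocal wave eq}. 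The continuity $u\in C([0,T];\widetilde{H}^s(\Omega))\cap C^1([0,T];\widetilde{L}^2(\Omega))$ and the attainment of the initial data follow by a standard argument (see e.g. Evans, PDEs, Ch. 7), using that $u\in L^\infty(0,T;\widetilde{H}^s(\Omega))$ with $\partial_t u \in L^\infty(0,T;\widetilde{L}^2(\Omega))$ and $\partial_t^2u\in L^2(0,T;H^{-s}(\Omega))$ together with the bilinear form being coercive (after absorbing $q$).

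Finally, for uniqueness and the energy identity \eqref{eq: energy identity}: uniqueness follows by subtracting two solutions, observing the difference $w$ solves the homogeneous problem with zero data, and applying the energy inequality \eqref{eq: energy estimate} (which holds for the difference). The energy identity itself cannot be obtained by naively testing \eqref{eq: weak formulation damped nonlocal wave eq} against $\partial_t u$, since $\partial_t u$ does not belong to $L^2(0,T;\widetilde{H}^s(\Omega))$. Instead I pass to the limit in the identity satisfied by $u_m$: the linear terms converge by weak lower semicontinuity and strong $L^2_tL^2_x$ convergence; the terms with $\|(-\Delta)^{s/2}u_m(t)\|_{L^2(\R^n)}$ and $\|\partial_t u_m(t)\|_{L^2(\Omega)}$ become equalities rather than inequalities because the reverse inequality is obtained from the differential equation satisfied by $u$ via a time-mollification (Steklov averaging) argument. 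The main obstacle is indeed this last step---justifying the chain rule for $t\mapsto \|\partial_t u(t)\|_{L^2}^2+\|(-\Delta)^{s/2}u(t)\|_{L^2}^2$ despite the low regularity of $\partial_t u$---which is handled by the standard Lions--Magenes type argument.
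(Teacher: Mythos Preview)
Your argument is correct and complete in outline; the main difference from the paper is methodological. The paper does not carry out the Galerkin construction by hand. Instead it identifies the sesquilinear forms
\[
a_0(u,v)=\langle (-\Delta)^{s/2}u,(-\Delta)^{s/2}v\rangle_{L^2(\R^n)},\quad a_1(u,v)=\langle qu,v\rangle_{L^2(\Omega)},\quad b(u,v)=\langle \gamma u,v\rangle_{L^2(\Omega)},
\]
verifies the same key estimate $\|qu\|_{L^2(\Omega)}\lesssim \|q\|_{L^p(\Omega)}\|u\|_{\widetilde{H}^s(\Omega)}$ that you derive, and then invokes the abstract well-posedness and energy-identity theorems for second-order evolution equations in Dautray--Lions, Vol.~5, Chapter~XVIII, \S5 (Theorems~3, 4 and Lemma~7). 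From there the energy estimate \eqref{eq: energy estimate} is obtained exactly as you do, via Gronwall.

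So both proofs hinge on the same analytic input (the Sobolev embedding bound on $qu$ under \eqref{eq: restrictions on p}); the paper outsources the Galerkin machinery, continuity, and the delicate justification of the energy identity to the Dautray--Lions framework, while you reproduce that machinery explicitly. Your route is more self-contained and makes transparent where the low regularity of $\partial_t u$ enters (your Steklov/Lions--Magenes remark is precisely what Lemma~7 in Dautray--Lions encapsulates), at the cost of length. The paper's route is shorter but requires checking that the abstract hypotheses on $a_0,a_1,b$ are met. Neither approach has an advantage in generality here.
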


    \begin{proof}
       Throughout the proof, we endow $\widetilde{H}^s(\Omega)$ with the equivalent norm  $\|u\|_{\widetilde{H}^s(\Omega)}=\|(-\Delta)^{s/2}u\|_{L^2(\R^n)}$ (see \cite[Lemma 2.3]{Semilinear-nonlocal-wave-eq}) and we introduce the following continuous sesquilinear forms
		\begin{equation}
			\label{eq: sesquilinear forms}
			a_0(u,v)=\langle (-\Delta)^{s/2}u,(-\Delta)^{s/2}v\rangle_{L^2(\R^n)},\quad a_1(u,v)= \langle qu,v\rangle_{L^2(\Omega)}
		\end{equation}
		for $u,v\in \widetilde{H}^s(\Omega)$ and
        \begin{equation}
            b(u,v)=\langle \gamma u,v\rangle_{L^2(\Omega)}
        \end{equation}
        for $u,v\in \widetilde{L}^2(\Omega)$. Next, recall that by \cite[eq.~(3.7)]{Semilinear-nonlocal-wave-eq} one has
        \begin{equation}
        \label{eq: L2 estimate potential}
            \|qu\|_{L^2(\Omega)}\leq C\|q\|_{L^p(\Omega)}\|u\|_{\widetilde{H}^s(\Omega)}
        \end{equation}
        for all $u\in \widetilde{H}^s(\Omega)$. It is not hard to see that we can invoke the existence and uniqueness results \cite[Chapter XVIII, \S 5, Theorem~3 \& 4]{DautrayLionsVol5} (see \cite[p.~571]{DautrayLionsVol5}), which ensure the existence of a unique, real-valued solution $u\in C([0,T];\widetilde{H}^s(\Omega))\cap C^1([0,T];\widetilde{L}^2(\Omega))$ to \eqref{eq: damped nonlocal wave equations well-posedness weak sol}. Furthermore, by \cite[Chapter XVIII, \S 5, Lemma~7]{DautrayLionsVol5} the solution $ u$ satisfies the following energy identity
    \begin{equation}
    \label{eq: energy identity proof}
    \begin{split}
         &\|\partial_t u(t)\|_{L^2(\Omega)}^2+\|(-\Delta)^{s/2}u(t)\|_{L^2(\R^n)}^2+2 \int_0^t \langle \gamma\partial_t u(\tau)+qu(\tau),\partial_t u(\tau)\rangle_{L^2(\Omega)}\,d\tau\\
         &=\|(-\Delta)^{s/2}u_0\|_{L^2(\R^n)}^2+\|u_1\|_{L^2(\Omega)}^2+2 \int_0^t\langle F(\tau),\partial_tu (\tau)\rangle_{L^2(\Omega)}\,d\tau
    \end{split}
    \end{equation}
    for $0\leq t\leq T$. Hence, we have shown the identity \eqref{eq: energy identity}. Let us define $\Psi \in C([0,T])$ by 
    \[
        \Psi(t)\vcentcolon = \|\partial_t u(t)\|_{L^2(\Omega)}^2+\|(-\Delta)^{s/2}u(t)\|_{L^2(\R^n)}^2
    \]
    for $0\leq t\leq T$. Using \eqref{eq: L2 estimate potential}, \eqref{eq: energy identity proof} and $\gamma\in L^{\infty}(\Omega)$, we get
    \[
    \begin{split}
        &\Psi(t)\leq \Psi(0)+\int_0^t \|F(\tau)\|_{L^2(\Omega)}^2\,d\tau+ C\int_0^t (1+\|\gamma\|_{L^{\infty}(\Omega)}+\|q\|^2_{L^p(\Omega)})\Psi(\tau)\,d\tau
    \end{split}
    \]
   and via Gronwall's inequality we deduce the energy estimate
    \begin{equation}
        \begin{split}
            &\Psi(t)\leq C(\Psi(0)+\|F\|_{L^2(0,t;L^2(\Omega))}^2)
        \end{split}
    \end{equation}
    for all $0\leq t\leq T$ and some $C>0$ only depending on $\|q\|_{L^p(\Omega)}$, $\|\gamma\|_{L^{\infty}(\Omega)}$ and $T>0$. This establishes the estimate \eqref{eq: energy estimate}. 
    \end{proof}

    As a consequence we have the following result:

    \begin{proposition}[Weak solutions to inhomogeneous DNWEQ]
    \label{prop: Weak solutions to inhomogeneous DNWEQ}
        Let $\Omega \subset\R^n$ be a bounded Lipschitz domain, $T>0$, $0<s< 1$ and suppose that $1\leq p\leq \infty$ satisfies \eqref{eq: restrictions on p}. Assume that we have given coefficients $(\gamma,q)\in L^{\infty}(\Omega)\times L^{p}(\Omega)$. Then for any $F\in L^2(0,T;\widetilde{L}^2(\Omega))$, exterior condition $\varphi\in C^2([0,T];H^{2s}(\R^n))$ and initial conditions $(u_0,u_1)\in H^s(\R^n)\times L^2(\R^n)$ satisfying the compatibility conditions $u_0-\varphi(0)\in \widetilde{H}^s(\Omega)$ and $u_1-\partial_t\varphi(0)\in \widetilde{L}^2(\Omega)$, there exists a unique weak solution $u\in C([0,T];H^s(\R^n))\cap C^1([0,T];L^2(\R^n))$ of 
       \begin{equation}
    \label{eq: damped nonlocal wave equations well-posedness weak sol inhom}
        \begin{cases}
         (L_{\gamma}+q)u = F & \text{ in } \Omega_T,\\
        u =\varphi & \text{ on } (\Omega_e)_T,\\
          u(0) = u_0,\, \partial_{t}u(0) = u_1 & \text{ on } \Omega,
    \end{cases}
    \end{equation}
    which means that $u$ satisfies \eqref{eq: weak formulation damped nonlocal wave eq}, the $(u(0),\partial_t u(0))=(u_0,u_1)$ in $H^s(\R^n)\times L^2(\R^n)$ and $u=\varphi$ in $(\Omega_e)_T$ means that $u(t)=\varphi(t)$ a.e. in $\Omega_e$ for any $0<t<T$. Furthermore, the following energy estimate holds
    \begin{equation}
    \label{eq: energy estimate inhomogeneous}
         \begin{split}
            &\|\partial_t u(t)\|_{L^2(\R^n)}+\|(-\Delta)^{s/2}u(t)\|_{L^2(\R^n)}\\
            &\leq C(\|u_1\|_{L^2(\R^n)}+\|(-\Delta)^{s/2}u_0\|_{L^2(\R^n)}+\|\varphi\|_{C^2([0,t];H^{2s}(\R^n))}+\|F\|_{L^2(0,t;L^2(\Omega))})
        \end{split}
    \end{equation}
    for any $0\leq t\leq T$. 
    \end{proposition}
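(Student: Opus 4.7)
The plan is a classical reduction to the homogeneous case treated in Theorem~\ref{thm: weak sol to hom DNWEQ} via subtraction of the exterior data. Set $w\vcentcolon= u-\varphi$; then any weak solution $u$ of \eqref{eq: damped nonlocal wave equations well-posedness weak sol inhom} should correspond to a $w$ solving
\[
\begin{cases}
(L_\gamma+q)w=\widetilde{F} & \text{ in } \Omega_T,\\
w=0 & \text{ on } (\Omega_e)_T,\\
w(0)=u_0-\varphi(0),\quad \partial_t w(0)=u_1-\partial_t\varphi(0) & \text{ on } \Omega,
\end{cases}
\]
with source $\widetilde{F}\vcentcolon= F-\partial_t^2\varphi-\gamma\partial_t\varphi-(-\Delta)^s\varphi-q\varphi$. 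The compatibility hypotheses say exactly that $(u_0-\varphi(0),u_1-\partial_t\varphi(0))\in\widetilde{H}^s(\Omega)\times\widetilde{L}^2(\Omega)$, so the initial data for $w$ is admissible for Theorem~\ref{thm: weak sol to hom DNWEQ}.

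Next I would verify $\widetilde{F}\in L^2(0,T;\widetilde{L}^2(\Omega))$. Since $\varphi\in C^2([0,T];H^{2s}(\R^n))$, both $\partial_t^2\varphi$ and $(-\Delta)^s\varphi$ lie in $C([0,T];L^2(\R^n))$, and $\gamma\partial_t\varphi\in C([0,T];L^2(\Omega))$ because $\gamma\in L^\infty(\Omega)$. For the multiplicative term, the exponent regime \eqref{eq: restrictions on p} was precisely what produced the bound \eqref{eq: L2 estimate potential}; the same Hölder/Sobolev argument (now applied to $\varphi(t)\in H^{2s}(\R^n)\hookrightarrow H^s(\R^n)$ rather than to an element of $\widetilde{H}^s(\Omega)$) yields $\|q\varphi(t)\|_{L^2(\Omega)}\leq C\|q\|_{L^p(\Omega)}\|\varphi(t)\|_{H^{2s}(\R^n)}$. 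Consequently
\[
\|\widetilde{F}\|_{L^2(0,T;L^2(\Omega))}\leq \|F\|_{L^2(0,T;L^2(\Omega))}+C\|\varphi\|_{C^2([0,T];H^{2s}(\R^n))},
\]
with $C$ depending on $\|\gamma\|_{L^\infty}$ and $\|q\|_{L^p}$.

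Theorem~\ref{thm: weak sol to hom DNWEQ} then produces a unique $w\in C([0,T];\widetilde{H}^s(\Omega))\cap C^1([0,T];\widetilde{L}^2(\Omega))$ satisfying the homogeneous problem with source $\widetilde{F}$ and the bound \eqref{eq: energy estimate}. Define $u\vcentcolon= w+\varphi$. Because $\varphi\in C^2([0,T];H^{2s}(\R^n))\subset C([0,T];H^s(\R^n))\cap C^1([0,T];L^2(\R^n))$, the function $u$ lives in the claimed space, its restriction to $(\Omega_e)_T$ agrees with $\varphi$ (as $w=0$ there), its initial values are $(u_0,u_1)$, and the weak formulation \eqref{eq: weak formulation damped nonlocal wave eq} for $u$ is obtained by adding the contribution of $\varphi$ back into the weak formulation satisfied by $w$. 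Uniqueness is immediate: the difference of two such $u$'s solves the homogeneous problem of Theorem~\ref{thm: weak sol to hom DNWEQ} with zero source and zero data, hence vanishes. The energy estimate \eqref{eq: energy estimate inhomogeneous} then follows from the triangle inequality $\|\partial_t u\|_{L^2(\R^n)}\leq \|\partial_t w\|_{L^2(\Omega)}+\|\partial_t\varphi\|_{L^2(\R^n)}$, and similarly for $(-\Delta)^{s/2}u$, combined with \eqref{eq: energy estimate} applied to $w$ and the bound on $\widetilde{F}$ above.

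The one point requiring genuine care is the multiplicative bound on $q\varphi$, because $\varphi$ is not compactly supported in $\Omega$ and so does not lie in $\widetilde{H}^s(\Omega)$; everything else is a routine affine reduction. The condition \eqref{eq: restrictions on p} is, however, calibrated precisely so that the $H^{2s}(\R^n)$-regularity of $\varphi$ combined with $q\in L^p(\Omega)$ is more than enough to close the estimate, which makes the obstacle rather mild in practice.
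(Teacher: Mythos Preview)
Your proposal is correct and follows exactly the paper's approach: reduce to the homogeneous case of Theorem~\ref{thm: weak sol to hom DNWEQ} by setting $w=u-\varphi$ and checking that the shifted source $F-(L_\gamma+q)\varphi$ and initial data are admissible. The paper's own proof is actually briefer than yours and does not spell out the $q\varphi$ bound, so your extra care there is a welcome addition rather than a deviation.
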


    \begin{proof}
        Observe, under the current regularity assumptions and compatibility conditions, that $u$ solves \eqref{eq: damped nonlocal wave equations well-posedness weak sol inhom} if and only if $w\vcentcolon = u-\varphi$ solves
       \begin{equation}
        \label{eq: Usual cauchy homogeneous}
            \begin{cases}
         (L_{\gamma}+q)w = F-(L_\gamma+q) \varphi & \text{ in } \Omega_T,\\
        w =0 & \text{ on } (\Omega_e)_T,\\
          w(0) = u_0-\varphi(0),\, \partial_{t}w(0) = u_1-\partial_t\varphi(0) & \text{ on } \Omega.
    \end{cases}
        \end{equation}
        The only fact to keep in mind is that if $u\in C([0,T];H^s(\R^n))$, then the condition $u(t)=\varphi(t)$ a.e. in $\Omega_e$ is equivalent to $u(t)-\varphi(t)\in \widetilde{H}^s(\Omega)$ as $\Omega\subset\R^n$ is a bounded Lipschitz domain. So, the assertions of Propsition \ref{prop: Weak solutions to inhomogeneous DNWEQ} follow immediately from Theorem \ref{thm: weak sol to hom DNWEQ}.
    \end{proof}

     Next, let us define for any $g\in L^1_{loc}(V_T)$, $V\subset\R^n$ open, its \emph{time-reversal}
    \begin{equation}
    \label{eq: time reversal}
        g^\star(x,t)=g(x,T-t).
    \end{equation}
    Then, we have the following lemma.
   \begin{lemma}
   \label{lemma: time reversal of solution}
       Let $\Omega \subset\R^n$ be a bounded Lipschitz domain, $T>0$, $0<s< 1$ and suppose that $1\leq p\leq \infty$ satisfies \eqref{eq: restrictions on p}. Assume that we have given coefficients $(\gamma,q)\in L^{\infty}(\Omega)\times L^{p}(\Omega)$. Let $F\in L^2(0,T;\widetilde{L}^2(\Omega))$, $\varphi\in C^2([0,T];H^{2s}(\R^n))$ and $(u_0,u_1)\in H^s(\R^n)\times L^2(\R^n)$ satisfying the compatibility conditions $u_0-\varphi(0)\in \widetilde{H}^s(\Omega)$ and $u_1-\partial_t\varphi(0)\in \widetilde{L}^2(\Omega)$. Then $u$ solves
        \begin{equation}
        \label{eq: Usual cauchy}
            \begin{cases}
         (L_{\gamma}+q)u = F & \text{ in } \Omega_T,\\
        u =\varphi & \text{ on } (\Omega_e)_T,\\
          u(0) = u_0,\, \partial_{t}u(0) = u_1 & \text{ on } \Omega,
    \end{cases}
        \end{equation}
        if and only if $u^\star$ solves
        \begin{equation}
        \label{eq: time reversed cauchy}
            \begin{cases}
         (L_{-\gamma}+q)v = F^\star & \text{ in } \Omega_T,\\
        v =\varphi^\star & \text{ on } (\Omega_e)_T,\\
          v(T) = u_0,\, \partial_{t}v(T) = u_1 & \text{ on } \Omega.
    \end{cases}
        \end{equation}
        In particular, for any $F\in L^2(\Omega_T)$, $\varphi\in C^2([0,T];H^{2s}(\R^n))$ and $(u_0,u_1)\in H^s(\R^n)\times L^2(\R^n)$ satisfying the compatibility conditions $u_0-\varphi(T)\in \widetilde{H}^s(\Omega)$ and $u_1-\partial_t\varphi(T)\in \widetilde{L}^2(\Omega)$, there exists a unique solution $u^\star$ of
         \begin{equation}
        \label{eq: backwards damped, nonlocal wave equation}
            \begin{cases}
         (L_{-\gamma}+q)v = F & \text{ in } \Omega_T,\\
        v =\varphi & \text{ on } (\Omega_e)_T,\\
          v(T) = u_0,\, \partial_{t}v(T) = u_1 & \text{ on } \Omega.
    \end{cases}
        \end{equation}
   \end{lemma}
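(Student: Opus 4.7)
The plan is to show that the involution $t\mapsto T-t$ intertwines the two Cauchy problems, so that the equivalence in the first assertion is purely algebraic and the second assertion then reduces to Proposition \ref{prop: Weak solutions to inhomogeneous DNWEQ}. The main point is that the map $g\mapsto g^\star$ leaves $(-\Delta)^s$, the multiplier $q$ and the multiplier $\gamma$ untouched (they are time-independent), commutes with $\partial_t^2$, but flips the sign of $\partial_t$. Hence applying it to the equation $L_\gamma u+qu=F$ sends the damping term $\gamma\partial_t u$ to $-\gamma\partial_t u^\star$, and produces $L_{-\gamma}u^\star+qu^\star=F^\star$.

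Concretely, set $v=u^\star$. The chain rule (valid for $v$ with the regularity $C([0,T];H^s(\R^n))\cap C^1([0,T];L^2(\R^n))$) gives $\partial_t v=-(\partial_t u)^\star$ and $\partial_t^2 v=(\partial_t^2 u)^\star$, so
\begin{equation}
(L_{-\gamma}v+qv)(t)=(\partial_t^2 u)(T-t)+\gamma(\partial_t u)(T-t)+(-\Delta)^s u(T-t)+q\,u(T-t)=(L_\gamma u+qu)^\star(t).
\end{equation}
I would first record this at the classical level and then promote it to the weak formulation \eqref{eq: weak formulation damped nonlocal wave eq}: testing against $\psi\in C_c^\infty((0,T))$, substituting $\tau=T-t$ and using that $\tilde\psi(\tau)\vcentcolon=\psi(T-\tau)$ satisfies $\tilde\psi'(\tau)=-\psi'(T-\tau)$, one checks that $\frac{d}{dt}\langle\partial_t u,v\rangle_{L^2(\Omega)}+\langle\gamma\partial_t u,v\rangle_{L^2(\Omega)}$ becomes $\frac{d}{dt}\langle\partial_t w,v\rangle_{L^2(\Omega)}-\langle\gamma\partial_t w,v\rangle_{L^2(\Omega)}$ at $w=u^\star$, which is exactly the weak form of $L_{-\gamma}$. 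The exterior condition $u(t)=\varphi(t)$ a.e.\ on $\Omega_e$ for every $t\in[0,T]$ is clearly equivalent under $t\mapsto T-t$ to $v(t)=\varphi^\star(t)$ a.e.\ on $\Omega_e$, and the prescribed data at $t=0$ for $u$ translate into prescribed data at $t=T$ for $v=u^\star$, proving the "if and only if" statement.

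The in particular part follows immediately: given backward data $(F,\varphi,u_0,u_1)$ as in \eqref{eq: backwards damped, nonlocal wave equation}, time-reverse everything to produce data $(F^\star,\varphi^\star,u_0,\partial_t u_1)$ (with the appropriate sign on the initial velocity coming from $\partial_t v(T)=-(\partial_t u^\star)(0)$) that satisfies the compatibility conditions of Proposition \ref{prop: Weak solutions to inhomogeneous DNWEQ} for the forward operator $L_{\gamma}+q$. That proposition produces a unique $w\in C([0,T];H^s(\R^n))\cap C^1([0,T];L^2(\R^n))$, and setting $u\vcentcolon=w^\star$ gives the unique solution of \eqref{eq: backwards damped, nonlocal wave equation}, with the energy estimate \eqref{eq: energy estimate inhomogeneous} transferring verbatim since both norms are invariant under time-reversal.

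The computation is essentially bookkeeping; the only step that needs a bit of care is translating the distributional identity $\frac{d}{dt}\langle\partial_t u,v\rangle_{L^2(\Omega)}\in\mathscr{D}'((0,T))$ through the pullback by $t\mapsto T-t$, which I would handle by pairing against test functions as above rather than by a formal substitution in the strong form.
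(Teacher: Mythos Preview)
Your approach is essentially the same as the paper's: reduce to the homogeneous exterior case (the paper does this explicitly via Proposition~\ref{prop: Weak solutions to inhomogeneous DNWEQ}, you handle it implicitly), use $\partial_t u^\star=-(\partial_t u)^\star$ and a change of variables $\tau=T-t$ in the weak formulation \eqref{eq: weak formulation damped nonlocal wave eq}, and then read off the backward well-posedness from the forward one. One small slip: in the ``in particular'' paragraph you write the reversed data as $(F^\star,\varphi^\star,u_0,\partial_t u_1)$, but $u_1$ carries no time dependence---you mean $-u_1$, consistent with the sign flip you correctly note just afterwards.
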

   \begin{proof}
       First, note that by the proof of Proposition \ref{prop: Weak solutions to inhomogeneous DNWEQ}, we can assume without loss of generality that $\varphi=0$. Secondly, one easily sees that $\partial_t u^\star=-(\partial_t u)^\star$ and thus a change of variables in \eqref{eq: weak formulation damped nonlocal wave eq} gives the asserted equivalence. The unique solvability of \eqref{eq: backwards damped, nonlocal wave equation} follows from the equivalence and Theorem \ref{thm: weak sol to hom DNWEQ}.
   \end{proof}

    \subsection{Very weak solutions}
    
    Let us start by making some simple observations. Suppose that $u$ and $v$ are smooth solutions of the problems
    \begin{equation}
    \label{eq: motivation very weak solutions 1}
        \begin{cases}
         (L_\gamma+q) u = F & \text{ in } \Omega_T,\\
        u =0 & \text{ on } (\Omega_e)_T,\\
          u(0) = u_0,\, \partial_{t}u(0) = u_1 & \text{ on } \Omega,
    \end{cases}
    \end{equation}
    and 
    \begin{equation}
    \label{eq: motivation very weak solutions 2}
        \begin{cases}
         (L_{-\gamma}+q) v = G & \text{ in } \Omega_T,\\
        v =0 & \text{ on } (\Omega_e)_T,\\
          v(T) = 0,\, \partial_{t}v(T) = 0 & \text{ on } \Omega,
    \end{cases}
    \end{equation}
    respectively. If we multiply the PDE \eqref{eq: motivation very weak solutions 1} by $v$ and integrate over $\Omega_T$, then we get
    \begin{equation}
    \label{eq: formal computation}
    \begin{split}
        &\int_{\Omega_T}Fv\,dxdt=\int_{\Omega_T}[(L_{\gamma}+q)u]v\,dxdt\\
        &= \int_{\Omega}u_0\partial_t v(0)\,dx-\int_{\Omega}u_1v(0)\,dx-\int_\Omega \gamma u_0v(0)\,dx+\int_{\Omega_T}u(L_{-\gamma}+q)v\,dxdt\\
        &= \int_{\Omega}u_0\partial_t v(0)\,dx-\int_{\Omega}u_1v(0)\,dx-\int_\Omega \gamma u_0v(0)\,dx+\int_{\Omega_T}Gu\,dxdt.
    \end{split}
    \end{equation}
    Notice that if $G\in L^2(0,T;\widetilde{L}^2(\Omega))$, $v\in L^2(0,T;\widetilde{H}^s(\Omega))$ and $(v(0),\partial_t v(0))\in \widetilde{H}^s(\Omega)\times\widetilde{L}^2(\Omega)$, then one can make sense of the first integral and the last line in \eqref{eq: formal computation}, even in the case $F\in L^2(0,T;H^{-s}(\Omega))$, $(u_0,u_1)\in \widetilde{L}^2(\Omega)\times H^{-s}(\Omega)$ and $u\in L^2(0,T;\widetilde{L}^2(\Omega))$. Here, $H^{-s}(\Omega)\subset\distr(\Omega)$ is defined by
    \[
        H^{-s}(\Omega)=\{u|_\Omega\,;\,u\in H^{-s}(\R^n)\}
    \]
    and it can be identified with the dual space of $\widetilde{H}^s(\Omega)$, when $\Omega$ is Lipschitz. The previous computation motivates the following definition.

    \begin{definition}[Very weak solutions]
    \label{def: very weak solutions}
        Let $\Omega \subset\R^n$ be a bounded Lipschitz domain, $T>0$, $0<s< 1$ and suppose that $1\leq p\leq \infty$ satisfies \eqref{eq: restrictions on p}. Assume that we have given coefficients $(\gamma,q)\in L^{\infty}(\Omega)\times L^{p}(\Omega)$, source $F\in L^2(0,T;H^{-s}(\Omega))$ and initial conditions $(u_0,u_1)\in \widetilde{L}^2(\Omega)\times H^{-s}(\Omega)$. Then we say that $u\in C([0,T];\widetilde{L}^2(\Omega))\cap C^1([0,T];H^{-s}(\Omega))$ is a \emph{very weak solution} of
         \begin{equation}
        \label{eq: def very weak}
        \begin{cases}
         (L_\gamma+q) u = F & \text{ in } \Omega_T,\\
        u =0 & \text{ on } (\Omega_e)_T,\\
          u(0) = u_0,\, \partial_{t}u(0) = u_1 & \text{ on } \Omega,
    \end{cases}
    \end{equation}
    whenever there holds\footnote{Here and below we sometimes write $\langle \cdot,\cdot\rangle$ to denote the duality pairing between $H^{-s}(\Omega)\times \widetilde{H}^s(\Omega)$.}
    \begin{equation}
    \label{eq: weak formulation of very weak sols}
        \int_0^T \langle G,u\rangle_{L^2(\Omega)}\,dt=\int_0^T\langle F,v\rangle\,dt+\langle u_1,v(0)\rangle-\langle u_0,\partial_t v(0)\rangle_{L^2(\Omega)}+\langle \gamma u_0,v(0)\rangle
    \end{equation}
     for all $G\in L^2(0,T;\widetilde{L}^2(\Omega))$, where $v\in C([0,T];\widetilde{H}^s(\Omega))\cap C^1([0,T];\widetilde{L}^2(\Omega))$ is the unique weak solution of the adjoint equation
      \begin{equation}
    \label{eq: adjoint eq of def very weak}
        \begin{cases}
         (L_{-\gamma}+q) v = G & \text{ in } \Omega_T,\\
        v =0 & \text{ on } (\Omega_e)_T,\\
          v(T) = 0,\, \partial_{t}v(T) = 0 & \text{ on } \Omega,
    \end{cases}
    \end{equation}
    (see Theorem \ref{thm: weak sol to hom DNWEQ}).
    \end{definition}

    Next, let us recall the following well-posedness result of very weak solutions.

    \begin{theorem}[{Very weak solutions for $\gamma=q=0$,\cite[Theorem 3.6]{Optimal-Runge-nonlocal-wave}}]
    \label{thm: well-posedness very weak without damping}
        Let $\Omega \subset\R^n$ be a bounded Lipschitz domain, $T>0$ and $0<s< 1$. Then for any given source $F\in L^2(0,T;H^{-s}(\Omega))$ and initial conditions $(u_0,u_1)\in \widetilde{L}^2(\Omega)\times H^{-s}(\Omega)$, there exists a unique solution to 
          \begin{equation}
        \label{eq: very weak without damping and potential}
        \begin{cases}
         (\partial_t^2+(-\Delta)^s) u = F & \text{ in } \Omega_T,\\
        u =0 & \text{ on } (\Omega_e)_T,\\
          u(0) = u_0,\, \partial_{t}u(0) = u_1 & \text{ on } \Omega
    \end{cases}
    \end{equation}
     and it satisfies the following energy estimate
    \begin{equation}
    \label{eq: energy estimate very weak without damping and potential}
        \|u(t)\|_{L^2(\Omega)}+\|\partial_t u(t)\|_{H^{-s}(\Omega)}\leq C(\|u_0\|_{L^2(\Omega)}+\|u_1\|_{H^{-s}(\Omega)}+\|F\|_{L^2(0,t;H^{-s}(\Omega))})
    \end{equation}
     for all $0\leq t\leq T$.
    \end{theorem}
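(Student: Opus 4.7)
My plan is to construct the very weak solution by the classical transposition (duality) method. For fixed data $(F,u_0,u_1)\in L^2(0,T;H^{-s}(\Omega))\times\widetilde{L}^2(\Omega)\times H^{-s}(\Omega)$, the right-hand side of the defining identity \eqref{eq: weak formulation of very weak sols} (with $\gamma=0$) is a linear expression in $G\in L^2(0,T;\widetilde{L}^2(\Omega))$, where $v$ denotes the weak solution of the adjoint problem \eqref{eq: adjoint eq of def very weak} with $\gamma=q=0$. Converting this backward problem into a forward problem via Lemma \ref{lemma: time reversal of solution} and applying Theorem \ref{thm: weak sol to hom DNWEQ} yields the a priori bound
\[
\|v\|_{C([0,T];\widetilde{H}^s(\Omega))}+\|\partial_t v\|_{C([0,T];\widetilde{L}^2(\Omega))}\leq C\|G\|_{L^2(0,T;L^2(\Omega))}.
\]
Each of the three terms on the right-hand side of \eqref{eq: weak formulation of very weak sols} is then controlled by $\|G\|_{L^2(0,T;L^2(\Omega))}$ times the appropriate norm of the data, so Riesz representation in the Hilbert space $L^2(0,T;\widetilde{L}^2(\Omega))$ produces a unique $u\in L^2(0,T;\widetilde{L}^2(\Omega))$ realizing the identity.

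To upgrade the regularity to $C([0,T];\widetilde{L}^2(\Omega))\cap C^1([0,T];H^{-s}(\Omega))$ and to verify the initial conditions, I would approximate $(F,u_0,u_1)$ by smooth data $(F_n,u_0^n,u_1^n)\in L^2(0,T;\widetilde{L}^2(\Omega))\times\widetilde{H}^s(\Omega)\times\widetilde{L}^2(\Omega)$ and take the associated weak solutions $u_n$ supplied by Theorem \ref{thm: weak sol to hom DNWEQ}. By linearity the scheme closes provided one has the \emph{lower-regularity} energy estimate
\[
\|u_n(t)\|_{L^2(\Omega)}+\|\partial_t u_n(t)\|_{H^{-s}(\Omega)}\leq C\bigl(\|u_0^n\|_{L^2(\Omega)}+\|u_1^n\|_{H^{-s}(\Omega)}+\|F_n\|_{L^2(0,t;H^{-s}(\Omega))}\bigr)
\]
uniformly in $n$, since then $\{u_n\}$ is Cauchy in $C([0,T];\widetilde{L}^2(\Omega))\cap C^1([0,T];H^{-s}(\Omega))$, the limit $u$ inherits both the stated regularity and the energy estimate \eqref{eq: energy estimate very weak without damping and potential}, and the initial conditions pass to the limit automatically.

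The main obstacle is proving this lower-regularity estimate, and my plan is to use another duality argument, this time at the smooth level. Fix $t\in(0,T]$. To bound $\|u_n(t)\|_{L^2(\Omega)}$ choose a test $\psi\in\widetilde{L}^2(\Omega)$ and let $v$ solve the homogeneous backward problem $L_0 v=0$ on $\Omega_t$ with zero exterior datum and terminal conditions $v(t)=0$, $\partial_t v(t)=-\psi$; to bound $\|\partial_t u_n(t)\|_{H^{-s}(\Omega)}$ take $\psi\in\widetilde{H}^s(\Omega)$ and use $v(t)=\psi$, $\partial_t v(t)=0$ instead. Both backward problems are well-posed by Lemma \ref{lemma: time reversal of solution}. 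Integration by parts in $\int_0^t\langle L_0 u_n,v\rangle_{L^2(\Omega)}\,d\tau=\int_0^t\langle F_n,v\rangle\,d\tau$, together with the symmetry of $(-\Delta)^s$ on $\widetilde{H}^s(\Omega)$, produces in the first case
\[
\langle u_n(t),\psi\rangle_{L^2(\Omega)}=\int_0^t\langle F_n,v\rangle\,d\tau+\langle u_1^n,v(0)\rangle_{L^2(\Omega)}-\langle u_0^n,\partial_t v(0)\rangle_{L^2(\Omega)},
\]
with an analogous formula for $\langle\partial_t u_n(t),\psi\rangle$ in the second case. The time-reversed energy estimate from Theorem \ref{thm: weak sol to hom DNWEQ} controls $\|v(0)\|_{\widetilde{H}^s(\Omega)}$, $\|\partial_t v(0)\|_{L^2(\Omega)}$ and $\|v\|_{L^2(0,t;\widetilde{H}^s(\Omega))}$ by $\|\psi\|_{L^2(\Omega)}$ (respectively $\|\psi\|_{\widetilde{H}^s(\Omega)}$), and taking the supremum over unit-norm $\psi$ completes the claim.

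Uniqueness is immediate: if $u$ is a very weak solution corresponding to zero data, the defining identity forces $\int_0^T\langle G,u\rangle_{L^2(\Omega)}\,dt=0$ for every $G\in L^2(0,T;\widetilde{L}^2(\Omega))$, hence $u\equiv 0$.
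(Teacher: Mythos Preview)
The paper does not prove this theorem itself; it is quoted verbatim from \cite[Theorem~3.6]{Optimal-Runge-nonlocal-wave}. Your proposal reconstructs the standard transposition (Lions--Magenes) argument for very weak solutions of wave equations, and the outline is correct: the boundedness of the functional $G\mapsto\text{RHS of \eqref{eq: weak formulation of very weak sols}}$ via the energy estimate for the adjoint problem, the lower-regularity energy bound obtained by pairing against homogeneous backward solutions with prescribed terminal data, the regularity upgrade by approximation with smooth data, and uniqueness from the defining identity. This is almost certainly the method used in the cited reference.

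Two minor remarks. First, the Riesz step is redundant once you carry out the approximation argument: the lower-regularity estimate already makes $(u_n)$ Cauchy in $C([0,T];\widetilde{L}^2(\Omega))\cap C^1([0,T];H^{-s}(\Omega))$, and the limit satisfies the very weak identity by passing to the limit in the identity for $u_n$ (that weak solutions satisfy \eqref{eq: weak formulation of very weak sols} is exactly the formal computation \eqref{eq: formal computation}, which can be justified at the regularity level of Theorem~\ref{thm: weak sol to hom DNWEQ} by the same parabolic-regularization device used later in the proof of Theorem~\ref{thm: very weak sol DNWEQ}). Second, in your duality estimate you write $\langle u_1^n,v(0)\rangle_{L^2(\Omega)}$ but then bound it by $\|u_1^n\|_{H^{-s}(\Omega)}\|v(0)\|_{\widetilde{H}^s(\Omega)}$; this is of course fine since the $L^2$ pairing extends to the $H^{-s}\times\widetilde{H}^s$ pairing, but it is worth making the identification explicit.
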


    Hence, we have a well-defined solution map.

    \begin{proposition}[Solution map]
    \label{prop: solution map}
         Let $\Omega \subset\R^n$ be a bounded Lipschitz domain, $T>0$, $0<s< 1$ and let $X_s\vcentcolon =\widetilde{L}^2(\Omega)\times H^{-s}(\Omega)$ be endowed with the usual product norm  
         \[
            \|(u,w)\|_{X_s}\vcentcolon = (\|u\|_{L^2(\Omega)}^2+\|w\|_{H^{-s}(\Omega)}^2)^{1/2}.
        \] 
        Then the \emph{solution map} $S\colon L^2(0,T;H^{-s}(\Omega))\to C([0,T];X_s)$ defined by
         \begin{equation}
         \label{eq: solution map}
             S(F)\vcentcolon = (u,\partial_t u),
         \end{equation}
         where $u\in C([0,T];\widetilde{L}^2(\Omega))\cap C^1([0,T];H^{-s}(\Omega))$ is the unique solution of \eqref{eq: very weak without damping and potential} with $(u_0,u_1)=0$. Moreover, the solution map is continuous and satisfies the estimate
         \begin{equation}
         \label{eq: continuity estimate solution map}
             \|S(F)(t)\|_{X_s}\leq C\|F\|_{L^2(0,t;H^{-s}(\Omega))}
         \end{equation}
         for any $0\leq t\leq T$.
    \end{proposition}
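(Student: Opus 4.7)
The proposition is essentially a repackaging of Theorem \ref{thm: well-posedness very weak without damping}, so the plan is to invoke that result directly and then extract well-definedness, the estimate, and continuity as immediate corollaries.

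First, for every $F\in L^2(0,T;H^{-s}(\Omega))$ I would apply Theorem \ref{thm: well-posedness very weak without damping} with the zero initial data $(u_0,u_1)=(0,0)\in \widetilde{L}^2(\Omega)\times H^{-s}(\Omega)$. This produces a unique very weak solution
\[
u\in C([0,T];\widetilde{L}^2(\Omega))\cap C^1([0,T];H^{-s}(\Omega))
\]
of \eqref{eq: very weak without damping and potential} with vanishing traces at $t=0$. Consequently $(u,\partial_t u)\in C([0,T];X_s)$, which shows that $S(F)$ is well defined as an element of $C([0,T];X_s)$.

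Next, specializing the energy estimate \eqref{eq: energy estimate very weak without damping and potential} to $u_0=u_1=0$ gives
\[
\|u(t)\|_{L^2(\Omega)}+\|\partial_t u(t)\|_{H^{-s}(\Omega)}\leq C\|F\|_{L^2(0,t;H^{-s}(\Omega))}
\]
for every $0\leq t\leq T$. Using the elementary bound $\sqrt{a^2+b^2}\leq a+b$ for $a,b\geq 0$, this is readily converted into
\[
\|S(F)(t)\|_{X_s}=\bigl(\|u(t)\|_{L^2(\Omega)}^2+\|\partial_t u(t)\|_{H^{-s}(\Omega)}^2\bigr)^{1/2}\leq C\|F\|_{L^2(0,t;H^{-s}(\Omega))},
\]
which is precisely the asserted estimate \eqref{eq: continuity estimate solution map}.

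Finally, linearity of $S$ is inherited from the linearity of the equation in \eqref{eq: very weak without damping and potential} and the uniqueness part of Theorem \ref{thm: well-posedness very weak without damping}: if $u^{(j)}$ are the very weak solutions associated to $F_j$ and $\alpha,\beta\in\R$, then $\alpha u^{(1)}+\beta u^{(2)}$ is itself a very weak solution with source $\alpha F_1+\beta F_2$ and zero Cauchy data, so uniqueness identifies it with the component of $S(\alpha F_1+\beta F_2)$. Taking $t=T$ in the previous display then upgrades this linearity to a bounded linear map between Banach spaces, whence continuity. No step here is genuinely hard; the main (and only) content is the appeal to Theorem \ref{thm: well-posedness very weak without damping}, which has already established existence, uniqueness and the energy estimate on which everything else rests.
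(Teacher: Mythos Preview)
Your proposal is correct and follows essentially the same route as the paper: both derive well-definedness from Theorem~\ref{thm: well-posedness very weak without damping}, obtain \eqref{eq: continuity estimate solution map} by specializing \eqref{eq: energy estimate very weak without damping and potential} to zero initial data, and deduce continuity from linearity (which in turn comes from uniqueness plus linearity of the PDE). The paper's version is simply more terse.
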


    \begin{proof}
        First of all note that the solution map $S$ is well-defined by Theorem \ref{thm: well-posedness very weak without damping}. The estimate \eqref{eq: continuity estimate solution map} follows from \eqref{eq: energy estimate very weak without damping and potential}, which together with the linearity of $S$ gives the continuity of $S$. Observe that the linearity of $S$ is a direct consequence of the unique solvability of \eqref{eq: very weak without damping and potential} and the fact that the PDE is linear.
    \end{proof}

    \begin{theorem}
    \label{thm: general well-posedness result of very weak solutions}
        Let $\Omega \subset\R^n$ be a bounded Lipschitz domain, $T>0$, $0<s< 1$ and suppose that $\mathcal{F}\colon C([0,T];X_s)\to L^2(0,T;H^{-s}(\Omega))$ satisfies the Lipschitz estimate
        \begin{equation}
        \label{eq: Lipschitz estimate nonlinearity}
            \|\mathcal{F}(U)(t)-\mathcal{F}(V)(t)\|_{H^{-s}(\Omega)}\leq C\|U(t)-V(t)\|_{X_s}
        \end{equation}
        for a.e.~$0\leq t\leq T$ and $U,V\in C([0,T];X_s)$. Then for all $(u_0,u_1)\in X_s$, there exists a unique solution $u$ of 
        \begin{equation}
        \label{eq: well-posedness nonlinear very weak}
             \begin{cases}
         (\partial_t^2+(-\Delta)^s) u = \mathcal{F}(u,\partial_t u) & \text{ in } \Omega_T,\\
        u =0 & \text{ on } (\Omega_e)_T,\\
          u(0) = u_0,\, \partial_{t}u(0) = u_1 & \text{ on } \Omega,
    \end{cases}
        \end{equation}
        that is the formula \eqref{eq: weak formulation of very weak sols} holds with $F$ replaced by $\mathcal{F}(u,\partial_t u)$ in which we test against every weak solution $v$ of the adjoint equation
        \begin{equation}
        \label{eq: adjoint eq nonlinear}
            \begin{cases}
         (\partial_t^2+(-\Delta)^s) v = G & \text{ in } \Omega_T,\\
        v =0 & \text{ on } (\Omega_e)_T,\\
          v(T) = 0,\, \partial_{t}v(T) = 0 & \text{ on } \Omega
    \end{cases}
        \end{equation}
        with $G\in L^2(0,T;\widetilde{L}^2(\Omega))$.
    \end{theorem}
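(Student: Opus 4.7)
The plan is to reformulate the nonlinear problem as a fixed point equation on $C([0,T];X_s)$ and to apply the Banach fixed point theorem with respect to an exponentially weighted norm. Concretely, for given initial data $(u_0,u_1)\in X_s$, I would define $T\colon C([0,T];X_s)\to C([0,T];X_s)$ by $T(U)\vcentcolon=(w,\partial_t w)$, where $w$ is the unique very weak solution furnished by Theorem \ref{thm: well-posedness very weak without damping} of the \emph{linear} problem
\[
    \begin{cases}
        (\partial_t^2+(-\Delta)^s)w=\mathcal{F}(U)&\text{in }\Omega_T,\\
        w=0&\text{on }(\Omega_e)_T,\\
        w(0)=u_0,\ \partial_t w(0)=u_1&\text{on }\Omega.
    \end{cases}
\]
The map $T$ is well-defined since $\mathcal{F}(U)\in L^2(0,T;H^{-s}(\Omega))$ by hypothesis, and by construction $u$ is a very weak solution of \eqref{eq: well-posedness nonlinear very weak} if and only if $U=(u,\partial_t u)$ is a fixed point of $T$.

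Next, for $\lambda>0$ I would endow $C([0,T];X_s)$ with the exponentially weighted norm $\|U\|_\lambda\vcentcolon=\sup_{t\in[0,T]}e^{-\lambda t}\|U(t)\|_{X_s}$, which is equivalent to the standard supremum norm and therefore yields a Banach space. Since the linear equation in the definition of $T$ is linear in $(\mathcal{F}(U),u_0,u_1)$, the initial data contribution cancels in the difference, giving $T(U)-T(V)=S(\mathcal{F}(U)-\mathcal{F}(V))$ where $S$ is the solution operator from Proposition \ref{prop: solution map}. Combining the continuity estimate \eqref{eq: continuity estimate solution map} with the Lipschitz hypothesis \eqref{eq: Lipschitz estimate nonlinearity} applied pointwise in time and squared, one obtains
\[
    \|T(U)(t)-T(V)(t)\|_{X_s}^2\leq C\int_0^t\|U(\tau)-V(\tau)\|_{X_s}^2\,d\tau \leq \frac{C}{2\lambda}e^{2\lambda t}\|U-V\|_\lambda^2,
\]
where the second inequality follows from inserting the factor $e^{2\lambda\tau}e^{-2\lambda\tau}$ inside the integral and bounding. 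Taking the supremum of $e^{-\lambda t}\|\cdot\|_{X_s}$ on both sides then yields $\|T(U)-T(V)\|_\lambda\leq \sqrt{C/(2\lambda)}\,\|U-V\|_\lambda$.

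Choosing $\lambda$ large enough that $\sqrt{C/(2\lambda)}<1$, the operator $T$ becomes a strict contraction on $(C([0,T];X_s),\|\cdot\|_\lambda)$, and the Banach fixed point theorem delivers a unique fixed point, which by the equivalence described above is the unique very weak solution of \eqref{eq: well-posedness nonlinear very weak}. The main obstacle I anticipate is not conceptual but notational: one must be careful that the very weak formulation \eqref{eq: weak formulation of very weak sols} (tested against every weak solution of the adjoint equation \eqref{eq: adjoint eq nonlinear}) is genuinely equivalent to the fixed point equation for $T$, which requires invoking Theorem \ref{thm: well-posedness very weak without damping} to identify the two notions and to absorb the initial data. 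The use of the weighted norm $\|\cdot\|_\lambda$ is what allows one to avoid any smallness assumption or time-splitting argument and to handle the full interval $[0,T]$ in one step; apart from that, everything is a standard Picard iteration.
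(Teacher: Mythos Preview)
Your proposal is correct and follows essentially the same approach as the paper: both reformulate the problem as a fixed point equation $\mathcal{T}(U)=U_h+S(\mathcal{F}(U))$ on $C([0,T];X_s)$ (your map $T$ is exactly this by superposition), use the same exponentially weighted norm to obtain a contraction with constant $C/(2\lambda)^{1/2}$, and invoke Banach's fixed point theorem. The only cosmetic difference is that the paper proves uniqueness separately via Gronwall's inequality rather than through the ``if and only if'' equivalence you state, and makes explicit that the second component of the fixed point equals $\partial_t$ of the first (which is automatic since the range of $T$ consists of pairs of the form $(w,\partial_t w)$).
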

  
    \begin{proof}[Proof of Theorem \ref{thm: general well-posedness result of very weak solutions}]
        Let $u_h\in C([0,T];\widetilde{L}^2(\Omega))\cap C^1([0,T];H^{-s}(\Omega))$ be the unique solution to 
        \begin{equation}
        \label{eq: homogeneous part}
               \begin{cases}
         (\partial_t^2+(-\Delta)^s) u = 0 & \text{ in } \Omega_T,\\
        u =0 & \text{ on } (\Omega_e)_T,\\
          u(0) = u_0,\, \partial_{t}u(0) = u_1 & \text{ on } \Omega
    \end{cases}
        \end{equation}
        and let us set $U_h\vcentcolon =(u_h,\partial_t u_h)\in C([0,T];X_s)$. Furthermore, we define the operator $\mathcal{T}\colon C([0,T];X_s)\to C([0,T];X_s)$ as
        \begin{equation}
            \mathcal{T}(U)\vcentcolon = U_h+S(\mathcal{F}(U)),
        \end{equation}
        which is well-defined by \eqref{prop: solution map} and the properties of $\mathcal{F}$. Next, we show that $\mathcal{T}$ has a unique fixed point $U=(U_1,U_2)$. 

        \medskip

        \noindent\textit{Step 1.~Existence.} Let $U,V\in C([0,T];X_s)$, then by linearity of $S$, \eqref{eq: continuity estimate solution map} and  \eqref{eq: Lipschitz estimate nonlinearity} we get
        \[
        \begin{split}
        \|\mathcal{T}(U)(t)-\mathcal{T}(V)(t)\|_{X_s}&=\|S(\mathcal{F}(U))(t)-S(\mathcal{F}(V))(t)\|_{X_s}\\
        &=\|S(\mathcal{F}(U)-\mathcal{F}(V))(t)\|_{X_s}\\
        &\leq C\|\mathcal{F}(U)-\mathcal{F}(V)\|_{L^2(0,t;H^{-s}(\Omega))}\\
         &\leq C\|U-V\|_{L^2(0,t;X_s)}.
        \end{split}
        \]
        Next, let us define the following norm on $X_s$
        \begin{equation}
        \label{eq: new norm on Xs}
            \|U\|_{\theta}\vcentcolon = \sup_{0\leq t\leq T}\left(e^{-\theta t}\|U(t)\|_{X_s}\right)
        \end{equation}
        for $\theta>0$, which will be fixed in a moment. Then we have the estimate
        \[
            \|\mathcal{T}(U)(t)-\mathcal{T}(V)(t)\|_{X_s}\leq C\left(\int_0^te^{2\theta \tau}\,d\tau\right)^{1/2}\|U-V\|_{\theta}\leq \frac{C}{(2\theta)^{1/2}}e^{\theta t}\|U-V\|_{\theta}
        \]
        and hence there holds
        \[
         \|\mathcal{T}(U)(t)-\mathcal{T}(V)(t)\|_{\theta}\leq \frac{C}{(2\theta)^{1/2}}\|U-V\|_{\theta}.
        \]
        Therefore, we deduce that $\mathcal{T}$ is a strict contraction from the complete metric space $(C([0,T];X_s),\|\cdot\|_{\theta})$ to itself, when $\theta>0$ is chosen such that $C/(2\theta)^{1/2}<1$. Now, we may invoke Banach's fixed point theorem to obtain a unique fixed point $U=(u,w)$ of $\mathcal{T}$. Next, observe that the definition of the solution map $S$ and $U=\mathcal{T}(U)=U_h+S(\mathcal{F}(U))$ imply
        \[
           u=u_h+u_n \text{ and } w=\partial_t u,
        \]
        where $u_n$ solves 
         \begin{equation}
        \label{eq: nonlinear part}
               \begin{cases}
         (\partial_t^2+(-\Delta)^s) v = \mathcal{F}(U) & \text{ in } \Omega_T,\\
        v =0 & \text{ on } (\Omega_e)_T,\\
          v(0) = 0,\, \partial_{t}v(0) = 0 & \text{ on } \Omega.
    \end{cases}
        \end{equation}
        Going back to the definition of very weak solutions, we see this implies that $u$ solves \eqref{eq: well-posedness nonlinear very weak}. 

        \medskip
        
        \noindent\textit{Step 2.~Uniqueness.} Suppose $\Tilde{u}\in C([0,T];\widetilde{L}^2(\Omega))\cap C^1([0,T];H^{-s}(\Omega))$ is any other solution to \eqref{eq: well-posedness nonlinear very weak}, then $\bar{u}\vcentcolon = u-\widetilde{u}$ solves
        \begin{equation}
         \label{eq: uniquenes very weak nonlinear}
             \begin{cases}
         (\partial_t^2+(-\Delta)^s) v = \mathcal{F}(u,\partial_t u)-\mathcal{F}(\widetilde{u},\partial_t \widetilde{u}) & \text{ in } \Omega_T,\\
        v =0 & \text{ on } (\Omega_e)_T,\\
          v(0) = 0,\, \partial_{t}v(0) = 0 & \text{ on } \Omega.
    \end{cases}
        \end{equation}
        Thus, applying the energy estimate \eqref{eq: energy estimate very weak without damping and potential} together with the Lipschitz assumption on $\mathcal{F}$, we see that
        \[
        \begin{split}
            \|U(t)-\widetilde{U}(t)\|^2_{X_s}&\leq C\int_0^t\|\mathcal{F}(U)(\tau)-\mathcal{F}(\widetilde{U})(\tau)\|_{H^{-s}(\Omega)}^2\,d\tau\\
            &\leq C\int_0^t\|U(t)-\widetilde{U}(t)\|^2_{X_s}\,d\tau,
        \end{split}
        \]
        where $U=(u,\partial_tu)$ and $\widetilde{U}=(\widetilde{u},\partial_t\widetilde{u})$. So, Gronwall's inequality shows that $u=\widetilde{u}$. This establishes the uniqueness assertion and we can conclude the proof.
    \end{proof}

    As an application of Theorem \ref{thm: general well-posedness result of very weak solutions}, we can show the unique solvability of \eqref{eq: damped nonlocal wave equations well-posedness} for rough source and initial data.

    \begin{theorem}[Very weak solutions to DNWEQ]
    \label{thm: very weak sol DNWEQ}
        Let $\Omega \subset\R^n$ be a bounded Lipschitz domain, $T>0$, $0<s<\alpha\leq 1$ and suppose that $1\leq p\leq \infty$ satisfies \eqref{eq: restrictions on p}. Assume that we have given coefficients $(\gamma,q)\in C^{0,\alpha}(\R^n)\times L^{p}(\Omega)$. Then for any $F\in L^2(0,T;H^{-s}(\Omega))$ and $(u_0,u_1)\in \widetilde{L}^2(\Omega)\times H^{-s}(\Omega)$, there exists a unique solution of
        \begin{equation}
             \label{eq: well-posedness very weak DNWEQ}
        \begin{cases}
         (L_\gamma+q) u = F & \text{ in } \Omega_T,\\
        u =0 & \text{ on } (\Omega_e)_T,\\
          u(0) = u_0,\, \partial_{t}u(0) = u_1 & \text{ on } \Omega.
    \end{cases}
        \end{equation}
    \end{theorem}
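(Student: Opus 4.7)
\medskip

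\noindent\textbf{Proof plan.} The plan is to reduce the problem to Theorem \ref{thm: general well-posedness result of very weak solutions} by absorbing the damping and potential terms into a Lipschitz nonlinearity. Concretely, I would define
\begin{equation}
\mathcal{F}\colon C([0,T];X_s)\to L^2(0,T;H^{-s}(\Omega)),\quad \mathcal{F}(U)(t) \vcentcolon = F(t)-\gamma U_2(t)-qU_1(t),
\end{equation}
where $U=(U_1,U_2)$. For the Lipschitz bound \eqref{eq: Lipschitz estimate nonlinearity} the two ingredients to verify are: (a) multiplication by $q\in L^p(\Omega)$ maps $\widetilde{L}^2(\Omega)\to H^{-s}(\Omega)$ continuously, which follows from \eqref{eq: L2 estimate potential} by duality (since $q$ is real-valued, $q\colon\widetilde{H}^s(\Omega)\to L^2(\Omega)$ dualizes to $q\colon L^2(\Omega)\to H^{-s}(\Omega)$); and (b) multiplication by $\gamma\in C^{0,\alpha}(\R^n)$ with $\alpha>s$ maps $H^{-s}(\Omega)\to H^{-s}(\Omega)$, which follows by duality from the standard fact that $C^{0,\alpha}$ functions act as pointwise multipliers on $H^s(\R^n)$.

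\medskip

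\noindent With $\mathcal{F}$ in place, Theorem \ref{thm: general well-posedness result of very weak solutions} produces a unique $u\in C([0,T];\widetilde{L}^2(\Omega))\cap C^1([0,T];H^{-s}(\Omega))$ solving
\begin{equation}
(\partial_t^2+(-\Delta)^s)u = F-\gamma\partial_t u-qu\quad \text{in the very weak sense of that theorem,}
\end{equation}
with $(u(0),\partial_t u(0))=(u_0,u_1)$ and $u=0$ on $(\Omega_e)_T$. It remains to check that this $u$ is precisely a very weak solution of \eqref{eq: well-posedness very weak DNWEQ} in the sense of Definition \ref{def: very weak solutions}, i.e.~against the damped adjoint equation. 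Given $G\in L^2(0,T;\widetilde{L}^2(\Omega))$, let $v\in C([0,T];\widetilde{H}^s(\Omega))\cap C^1([0,T];\widetilde{L}^2(\Omega))$ be the weak solution of \eqref{eq: adjoint eq of def very weak} supplied by Lemma \ref{lemma: time reversal of solution}. Rewrite this as
\begin{equation}
(\partial_t^2+(-\Delta)^s)v = G+\gamma\partial_t v-qv\ =:\ \widetilde G\in L^2(0,T;\widetilde{L}^2(\Omega)),
\end{equation}
so $v$ is simultaneously the weak solution of the \emph{undamped} adjoint equation with source $\widetilde G$ and zero terminal data. Applying the very weak identity from Theorem \ref{thm: general well-posedness result of very weak solutions} to $u$ with test source $\widetilde G$ yields
\begin{equation}
\int_0^T \langle \widetilde G,u\rangle_{L^2}\,dt = \int_0^T\langle F-\gamma\partial_t u-qu,v\rangle\,dt + \langle u_1,v(0)\rangle - \langle u_0,\partial_t v(0)\rangle_{L^2}.
\end{equation}
Expanding $\widetilde G$ on the left cancels the $\langle qu,v\rangle$ terms, and an integration by parts in $t$ on the identity
\begin{equation}
\frac{d}{dt}\langle \gamma v,u\rangle_{L^2} = \langle \gamma\partial_t v,u\rangle_{L^2}+\langle \gamma v,\partial_t u\rangle_{\widetilde H^s\times H^{-s}}
\end{equation}
(together with $v(T)=0$, $u(0)=u_0$) converts the remaining $\gamma$ contributions into the boundary term $\langle \gamma u_0,v(0)\rangle$, producing exactly \eqref{eq: weak formulation of very weak sols}. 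Uniqueness under Definition \ref{def: very weak solutions} follows from the reverse reading: any such $u$ solves $(\partial_t^2+(-\Delta)^s)u=\mathcal{F}(u,\partial_t u)$ against the undamped adjoint (specialize $\widetilde G=G$ with $\gamma=q=0$), and Theorem \ref{thm: general well-posedness result of very weak solutions} then forces uniqueness.

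\medskip

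\noindent The main obstacle I anticipate is the integration-by-parts step in $t$: the product $\langle \gamma v,u\rangle_{L^2}$ mixes a factor $v\in C^1([0,T];\widetilde L^2(\Omega))$ with $u\in C([0,T];\widetilde L^2(\Omega))\cap C^1([0,T];H^{-s}(\Omega))$, so the product rule must be justified by a density/mollification argument in $t$ (e.g.~approximating $u$ in $C([0,T];\widetilde L^2)\cap C^1([0,T];H^{-s})$), using that $\gamma v\in C([0,T];\widetilde H^s(\Omega))\cap C^1([0,T];\widetilde L^2(\Omega))$ by the multiplier property of $\gamma$. Everything else is a bookkeeping exercise, and the Lipschitz bound on $\mathcal{F}$ is immediate from the two continuity properties stated above.
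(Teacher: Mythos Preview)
Your proposal is correct and follows essentially the same approach as the paper: define $\mathcal{F}(U)=F-\gamma U_2-qU_1$, verify the Lipschitz bound via the duality estimates for $q$ and $\gamma$, apply Theorem~\ref{thm: general well-posedness result of very weak solutions}, and then translate between the undamped and damped adjoint formulations using the integration by parts identity for $\langle \gamma v,u\rangle$. The only noteworthy difference is in the justification of that last step: the paper regularizes $v$ via a parabolically perturbed adjoint problem (adding $-\eps(-\Delta)^s\partial_t$) to gain $v_\eps\in H^1(0,T;\widetilde H^s(\Omega))$, whereas you propose mollifying $u$ in time; in fact, since $\gamma v\in C([0,T];\widetilde H^s(\Omega))\cap C^1([0,T];\widetilde L^2(\Omega))$ and $u\in C([0,T];\widetilde L^2(\Omega))\cap C^1([0,T];H^{-s}(\Omega))$, the product rule can be verified directly without any approximation, so both regularizations are more caution than necessity.
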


    \begin{proof}
        Let us define the mapping $\mathcal{F}\colon C([0,T];X_s)\to L^2(0,T;H^{-s}(\Omega))$ by
        \[
            \mathcal{F}(U)(t)\vcentcolon =F-\gamma w(t)-qu(t),
        \]
        where $U=(u,w)\in C([0,T];X_s)$. On the one hand, using the estimate \eqref{eq: L2 estimate potential} we see that for any $u\in \widetilde{L}^2(\Omega)$ one has $qu\in H^{-s}(\Omega)$ and there holds
        \begin{equation}
        \label{eq: continuity estimate dual potential} 
            \begin{split}
                \|qu\|_{H^{-s}(\Omega)}
                &=\sup_{\|v\|_{\widetilde{H}^s(\Omega)}\leq 1}|\langle u,qv\rangle_{L^2(\Omega)}|\\
                &\leq C\|q\|_{L^p(\Omega)}\|u\|_{L^2(\Omega)}.
            \end{split}
        \end{equation}
        On the other hand, by applying \cite[Lemma 3.1]{Stability-fractional-conductivity} and $\partial\Omega\in C^0$ we deduce that for any $v\in \widetilde{H}^s(\Omega)$ one has $\gamma v\in \widetilde{H}^s(\Omega)$ and it obeys the estimate
        \begin{equation}
        \label{eq: multiplication by gamma}
            \|\gamma v\|_{\widetilde{H}^s(\Omega)}\leq C\|\gamma\|_{C^{0,\alpha}(\R^n)}\|v\|_{\widetilde{H}^s(\Omega)}.
        \end{equation}
        Thus, we can again infer from a duality argument that $H^{-s}(\Omega)\ni w\mapsto \gamma w\in H^{-s}(\Omega)$ is a continuous map satisfying
        \begin{equation}
        \label{eq: continuity estimate dual damping} 
             \begin{split}
                \|\gamma w\|_{H^{-s}(\Omega)}&=\sup_{\|v\|_{\widetilde{H}^s(\Omega)}\leq 1}|\langle \gamma w,v\rangle|\\
                &=\sup_{\|v\|_{\widetilde{H}^s(\Omega)}\leq 1}|\langle w,\gamma v\rangle|\\
                &\leq C\|\gamma\|_{C^{0,\alpha}(\R^n)}\|w\|_{H^{-s}(\Omega)}.
            \end{split}
        \end{equation}
        From the estimates \eqref{eq: continuity estimate dual potential} and \eqref{eq: continuity estimate dual damping}, we easily deduce that $\mathcal{F}$ is well-defined and satisfies the Lipschitz estimate 
        \begin{equation}
        \label{eq: Lipschitz estimate application}
            \|\mathcal{F}(U)(t)-\mathcal{F}(V)(t)\|_{H^{-s}(\Omega)}\leq C(\|\gamma\|_{C^{0,\alpha}(\R^n)}+\|q\|_{L^p(\Omega)})\|U(t)-V(t)\|_{X_s}
        \end{equation}
        for all $U,V\in C([0,T];X_s)$. Thus, we can apply Theorem  \ref{thm: general well-posedness result of very weak solutions} to get the existence of a unique solution to \eqref{eq: well-posedness very weak DNWEQ} in the sense that for any $G\in L^2(0,T;\widetilde{L}^2(\Omega))$ and corresponding solution $v$ of \eqref{eq: adjoint eq nonlinear}, there holds
        \begin{equation}
        \label{eq: prelim def of very weak sol}
         \int_0^T \langle G,u\rangle_{L^2(\Omega)}\,dt=\int_0^T\langle (F-\gamma \partial_t u-qu),v\rangle\,dt+\langle u_1,v(0)\rangle-\langle u_0,\partial_t v(0)\rangle_{L^2(\Omega)}.
        \end{equation}
        It remains to verify that $u$ is indeed a solution of \eqref{eq: well-posedness very weak DNWEQ} in the sense of Definition \ref{def: very weak solutions}. For this purpose let $G\in L^2(0,T;\widetilde{L}^2(\Omega))$ and suppose that $v$ is the unique solution to \eqref{eq: adjoint eq of def very weak}. Hence, $v$ solves
        \[
        \begin{cases}
         (\partial_t^2+(-\Delta)^s) v = \widetilde{G} & \text{ in } \Omega_T,\\
        v =0 & \text{ on } (\Omega_e)_T,\\
          v(T) = 0,\, \partial_{t}v(T) = 0 & \text{ on } \Omega
    \end{cases}
        \]
        with $\widetilde{G}=G+\gamma \partial_t v-qv\in L^2(0,T;\widetilde{L}^2(\Omega))$ (see \eqref{eq: L2 estimate potential}). Next, we claim that there holds
        \begin{equation}
        \label{eq: integration by parts formula}
            \int_0^T \langle \gamma \partial_t u,v\rangle\,dt=-\int_0^T \langle \gamma\partial_t v,u\rangle_{L^2(\Omega)}\,dt-\langle \gamma u_0,v(0)\rangle.
        \end{equation}
        For this purpose, let us consider for $\eps>0$ the unique solution $v_\eps\in H^1(0,T;\widetilde{H}^s(\Omega))$ with $\partial_t^2 v_\eps\in L^2(0,T;H^{-s}(\Omega))$ to the following parabolically regularized problem
        \begin{equation}
				\label{eq: regularization for equation of w}
				\begin{cases}
					(\partial_t^2 -\eps (-\Delta)^s \partial_t +(-\Delta)^s)v = \widetilde{G}&\text{ in }\Omega_T,\\
					v=0  &\text{ in }(\Omega_e)_T,\\
					v(T)= \partial_t v(T)=0 &\text{ in }\Omega
				\end{cases}
		\end{equation}
        (see \cite[Chapter XVIII, Section 5.3.1]{DautrayLionsVol5}). By \cite[Chapter XVIII, Section 5.3.4]{DautrayLionsVol5} we know that there holds
        \begin{equation}
        \label{eq: convergence}
            \begin{split}
					v_{\eps}&\weakstar v \text{ in }L^{\infty}(0,T;\widetilde{H}^s(\Omega)),\\
					\partial_t v_{\eps}&\weakstar \partial_t v \text{ in }L^{\infty}(0,T;\widetilde{L}^2(\Omega)),\\
					v_\eps(t)&\to v(t) \text{ in } \widetilde{H}^s(\Omega)\text{ for all }0\leq t\leq T.
				\end{split}
        \end{equation}
        First, note that the conditions $u\in C^1([0,T];H^{-s}(\Omega))$ and $v_\eps\in C^1([0,T];\widetilde{L}^2(\Omega))$, where the latter follows from the Sobolev embedding, guarantee that $\langle \gamma u,v_\eps\rangle\in C^1([0,T])$ with
        \begin{equation}
        \label{eq: product rule}
            \partial_t \langle \gamma u,v_\eps\rangle=\langle \partial_t u,\gamma v_\eps\rangle+\langle u,\gamma \partial_t v_\eps\rangle_{L^2(\Omega)}.
        \end{equation}
        Thus, by the fundamental theorem of calculus we deduce that there holds
        \[
            \langle \gamma u(T),v_\eps(T)\rangle-\langle \gamma u_0,v_\eps(0)\rangle=\int_0^T\langle \partial_t u,\gamma v_\eps\rangle+\langle u,\gamma \partial_t v_\eps\rangle_{L^2(\Omega)}\,dt.
        \]
        By the convergence assertions \eqref{eq: convergence} and $v_\eps (T)=0$, we get
        \[
            -\langle \gamma u_0,v(0)\rangle=\int_0^T\langle \partial_t u,\gamma v\rangle+\langle u,\gamma \partial_t v\rangle_{L^2(\Omega)}\,dt.
        \]
        This proves \eqref{eq: integration by parts formula}. Hence, inserting this into \eqref{eq: prelim def of very weak sol}, we obtain
        \[
        \begin{split}
            \int_0^T\langle\widetilde{G},u\rangle_{L^2(\Omega)}\,dt&=\int_0^T\langle (F-\gamma \partial_t u-qu),v\rangle\,dt+\langle u_1,v(0)\rangle-\langle u_0,\partial_t v(0)\rangle_{L^2(\Omega)}\\
            &=\int_0^T\langle F,v\rangle\,dt+\int_0^T \langle u,\gamma \partial_t v\rangle_{L^2(\Omega)}\,dt-\int_0^T\langle u,qv\rangle\,dt\\
            &\quad+\langle u_1,v(0)\rangle-\langle u_0,\partial_t v(0)\rangle_{L^2(\Omega)}+\langle \gamma u_0,v(
        0)\rangle.
        \end{split}
        \]
        As $\widetilde{G}=G+\gamma \partial_t v-qv$ this gives
        \[
        \begin{split}
            \int_0^T \langle G,u\rangle_{L^2(\Omega)}\,dt&=\int_0^T \langle F,v\rangle\, dt+\langle u_1,v(0)\rangle-\langle u_0,\partial_t v(0)\rangle_{L^2(\Omega)}+\langle \gamma u_0,v(0)\rangle.
        \end{split}
        \]
        Hence, we observe that $u$ is indeed a solution of \eqref{eq: well-posedness very weak DNWEQ} in the sense of Definition \ref{def: very weak solutions}. By reversing the above arguments one can also observe that if $u$ is a solution in the sense of Definition \ref{def: very weak solutions}, then by \eqref{eq: integration by parts formula} it is a solution in the sense of \eqref{eq: prelim def of very weak sol} and thus the solution in the sense of Definition \ref{def: very weak solutions} is unique.
    \end{proof}

    \section{The inverse problem}
    \label{sec: inverse problem}

    After establishing the theory of very weak solutions to damped, nonlocal wave equations, we now turn our attention to the inverse problem part. First, in Section \ref{sec: Runge approx} we prove the optimal Runge approximation theorem (Theorem \ref{thm: Runge approx}) and in Section \ref{sec: integral identity} a suitable integral identity. Using these results, we then show in Section  \ref{sec: linear uniqueness} our first main result dealing with linear perturbations (Theorem \ref{thm: uniqueness linear}). Finally, in Section \ref{sec: semilinear uniqueness} we prove Theorem \ref{thm: uniqueness semilinear} showing that the damping coefficient and the nonlinearity can be determined simultaneously.

    \subsection{Runge approximation}
    \label{sec: Runge approx}

    With the material from Section \ref{sec: Very weak solutions to damped, nonlocal wave equations} at our disposal, we can now show the following Runge approximation theorem, whose proof is very similar to the one of \cite[Theorem 1.2]{Optimal-Runge-nonlocal-wave}.

    \begin{theorem}[Runge approximation]
    \label{thm: Runge approx}
        Let $\Omega \subset\R^n$ be a bounded Lipschitz domain, $T>0$, $0<s<\alpha\leq 1$ and suppose that $1\leq p\leq \infty$ satisfies \eqref{eq: restrictions on p}. Assume that we have given coefficients $(\gamma,q)\in C^{0,\alpha}(\R^n)\times L^{p}(\Omega)$. Then for any measurement set $W\subset\Omega_e$ and initial conditions $(u_0,u_1)\in \widetilde{H}^s(\Omega)\times\widetilde{L}^2(\Omega)$, the \emph{Runge set}
        \begin{equation}
        \label{eq: Runge set}
            \mathcal{R}^{u_0,u_1}_W\vcentcolon = \{u_\varphi-\varphi\,;\,\varphi\in C_c^{\infty}(W_T)\}
        \end{equation}
        is dense in $L^2(0,T;\widetilde{H}^s(\Omega))$, where $u_\varphi$ is the unique solution to
        \begin{equation}
        \label{eq: PDE Runge}
            \begin{cases}
         (L_{\gamma}+q)u = 0 & \text{ in } \Omega_T,\\
        u =\varphi & \text{ on } (\Omega_e)_T,\\
          u(0) = u_0,\, \partial_{t}u(0) = u_1 & \text{ on } \Omega,
    \end{cases}
        \end{equation}
        (see Proposition \ref{prop: Weak solutions to inhomogeneous DNWEQ}).
    \end{theorem}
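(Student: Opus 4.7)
The plan is to follow the Hahn--Banach strategy of \cite[Theorem 1.2]{Optimal-Runge-nonlocal-wave}, feeding in the very weak solution machinery built in Section \ref{sec: Very weak solutions to damped, nonlocal wave equations} to accommodate the damping term and to make the required duality identity rigorous. First I would reduce to vanishing initial data: taking $\varphi = 0$ in \eqref{eq: PDE Runge} produces a fixed element $u_\ast \in \mathcal{R}^{u_0,u_1}_W$, and $v_\varphi := u_\varphi - u_\ast$ then solves \eqref{eq: PDE Runge} with $(v_\varphi(0),\partial_t v_\varphi(0))=(0,0)$ and exterior value $\varphi$. Hence $\mathcal{R}^{u_0,u_1}_W = u_\ast + \{v_\varphi - \varphi : \varphi \in C_c^\infty(W_T)\}$, so by linearity the claim reduces to density of the linear subspace on the right in $L^2(0,T;\widetilde{H}^s(\Omega))$.

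By Hahn--Banach and the identification $(L^2(0,T;\widetilde{H}^s(\Omega)))^\ast = L^2(0,T;H^{-s}(\Omega))$, I would pick $F \in L^2(0,T;H^{-s}(\Omega))$ annihilating this subspace and aim to show $F = 0$. Two ingredients enter: on the one hand, $\widetilde v_\varphi := v_\varphi - \varphi$ is the weak solution of
\begin{equation}
(L_\gamma+q)\widetilde v_\varphi = -(-\Delta)^s\varphi|_\Omega \text{ in } \Omega_T,\quad \widetilde v_\varphi = 0 \text{ on } (\Omega_e)_T,\quad \widetilde v_\varphi(0)=\partial_t\widetilde v_\varphi(0)=0,
\end{equation}
because $\spt\varphi \subset W_T \subset (\Omega_e)_T$ kills every local term in $\Omega$ and $(-\Delta)^s\varphi|_\Omega \in L^2(0,T;\widetilde{L}^2(\Omega))$ by smoothness of $\varphi$. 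On the other hand, by Theorem \ref{thm: very weak sol DNWEQ} together with the time reversal of Lemma \ref{lemma: time reversal of solution}, there is a unique very weak solution $w \in C([0,T];\widetilde{L}^2(\Omega)) \cap C^1([0,T];H^{-s}(\Omega))$ of the backward adjoint problem
\begin{equation}
(L_{-\gamma}+q)w = F \text{ in } \Omega_T,\quad w = 0 \text{ on } (\Omega_e)_T,\quad w(T)=\partial_t w(T)=0.
\end{equation}

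Plugging $\widetilde v_\varphi$ and $w$ into the very weak formulation \eqref{eq: weak formulation of very weak sols} (applied via time reversal, with test source $G = -(-\Delta)^s\varphi|_\Omega$), all terminal contributions vanish thanks to $w(T) = \partial_t w(T) = 0$, and the identity collapses to
\begin{equation}
\int_0^T \langle F, v_\varphi - \varphi \rangle\, dt = -\int_0^T \langle (-\Delta)^s\varphi, w\rangle_{L^2(\Omega)}\, dt = -\int_0^T \int_W \varphi\, (-\Delta)^s w\, dx\, dt,
\end{equation}
where the last equality shifts $(-\Delta)^s$ onto $w$ by duality on $\R^n$ (valid since $w(\cdot,t)\in\widetilde{L}^2(\Omega)\subset L^2(\R^n)$) and then uses $\spt\varphi\subset W_T$. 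The annihilation hypothesis therefore forces $(-\Delta)^s w = 0$ on $W_T$ in the distributional sense; combined with $w=0$ on $W_T$ (a consequence of $w(\cdot,t)\in\widetilde{L}^2(\Omega)$) and the unique continuation property of $(-\Delta)^s$ applied at a.e.~$t\in(0,T)$, this yields $w \equiv 0$ in $\R^n_T$ and hence $F = (L_{-\gamma}+q)w = 0$.

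The hard part is making the integration-by-parts identity above rigorous: $v_\varphi$ only lies in $C([0,T];H^s(\R^n)) \cap C^1([0,T];L^2(\R^n))$ and $w$ is merely a very weak solution, so the formal computation \eqref{eq: formal computation} cannot be justified directly. The appearance of the damping term $\gamma\partial_t u$ is handled transparently because the pairing $\langle \gamma u_0, v(0)\rangle$ from \eqref{eq: weak formulation of very weak sols} vanishes for our $w$, whose terminal data are zero; this is precisely why the $\gamma$-contribution has been carried through the very weak formulation of Section \ref{sec: Very weak solutions to damped, nonlocal wave equations}.
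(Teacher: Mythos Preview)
Your approach matches the paper's: reduce to zero initial data, apply Hahn--Banach, pair the annihilating functional with a very weak solution of the (time-reversed) adjoint problem, use the very weak formulation \eqref{eq: weak formulation of very weak sols} itself as the rigorous duality identity, and conclude via the UCP for $(-\Delta)^s$. The only place the paper adds detail is the final implication $w\equiv 0\Rightarrow F=0$: writing ``$F=(L_{-\gamma}+q)w$'' presupposes that very weak solutions are distributional, which the paper verifies by taking $G=\chi_\Omega(L_{-\gamma}+q)\Phi$ with $\Phi\in C_c^\infty(\Omega_T)$ in \eqref{eq: weak formulation of very weak sols} to obtain $\int_0^T\langle F,\Phi\rangle\,dt=0$, and then invokes density of $C_c^\infty(\Omega_T)$ in $L^2(0,T;\widetilde H^s(\Omega))$.
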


    \begin{proof}
        First of all note that it is enough to consider the case $(u_0,u_1)=0$. To see this assume that the density holds for $\mathcal{R}_W\vcentcolon =\mathcal{R}_W^{0,0}$ and let $f\in L^2(0,T;\widetilde{H}^s(\Omega))$. Let $v_0$ be the unique solution to
        \begin{equation}
              \begin{cases}
         (L_{\gamma}+q)v = 0 & \text{ in } \Omega_T,\\
        v =0 & \text{ on } (\Omega_e)_T,\\
          v(0) = u_0,\, \partial_{t}v(0) = u_1 & \text{ on } \Omega
    \end{cases}
        \end{equation} 
        and define $\widetilde{f}\vcentcolon = f-v_0\in L^2(0,T;\widetilde{H}^s(\Omega))$. By assumption there exists $(\varphi_k)_{k\in\N}\subset C_c^{\infty}(W_T)$ such that $u_k-\varphi_k\to \widetilde{f}$ in $L^2(0,T;\widetilde{H}^s(\Omega))$ as $k\to \infty$, where $u_k$ is the unique solution to 
        \begin{equation}
        \label{eq: PDE Runge vanishing initial}
              \begin{cases}
         (L_{\gamma}+q)u = 0 & \text{ in } \Omega_T,\\
        u =\varphi & \text{ on } (\Omega_e)_T,\\
          u(0) = 0,\, \partial_{t}u(0) = 0 & \text{ on } \Omega
    \end{cases}
        \end{equation} 
        with $\varphi=\varphi_k$. Then $v_k\vcentcolon = u_k+v_0$ is the unique solution to \eqref{eq: PDE Runge} with $\varphi=\varphi_k$. The above convergence now implies $v_k-\varphi_k\to f$ in $L^2(0,T;\widetilde{H}^s(\Omega))$ as $k\to\infty$ and we get that $\mathcal{R}_W^{u_0,u_1}$ is dense in $L^2(0,T;\widetilde{H}^s(\Omega))$.

        Therefore, it remains to show that $\mathcal{R}_W$ is dense in $L^2(0,T;\widetilde{H}^s(\Omega))$. As usual, we show this by a Hahn--Banach argument. Thus, suppose that $F\in L^2(0,T;H^{-s}(\Omega))$ vanishes on $\mathcal{R}_W$. Let us recall that if $\varphi\in C_c^{\infty}(W_T)$ and $u$ solves \eqref{eq: PDE Runge vanishing initial}, then by \eqref{eq: Usual cauchy homogeneous} and Lemma \ref{lemma: time reversal of solution} the function $v=(u-\varphi)^\star$ satisfies
		\begin{equation}
			\begin{cases}
				(L_{-\gamma}+q)v = -(-\Delta)^s\varphi^\star &\text{ in }\Omega_T,\\
				v=0  &\text{ in }(\Omega_e)_T,\\
				v(T)=\partial_t v(T)=0 &\text{ in }\Omega.
			\end{cases}
		\end{equation}
        Next, let $w$ be the unique solution to
		\begin{equation}
			\label{eq: solution to weak RHS}
			\begin{cases}
				(L_{\gamma}+q)w= F^\star &\text{ in }\Omega_T,\\
				w=0  &\text{ in }(\Omega_e)_T,\\
				w(0)= \partial_t w(0)=0 &\text{ in }\Omega
			\end{cases}
		\end{equation}
        (see Theorem \ref{thm: very weak sol DNWEQ}). By testing the equation for $w$ by $v$, we get
		\[
		-\int_0^T \langle (-\Delta)^s\varphi^\star, w\rangle_{L^2(\Omega)}\,dt=\int_0^T\langle F^\star,v\rangle\,dt=\int_0^T\langle F,u_\varphi-\varphi\rangle\,dt=0
		\]
        for any $\varphi\in C_c^{\infty}(W_T)$. This ensures that there holds
		\[
		(-\Delta)^s w=0\quad \text{ in }W_T.
		\]
        Furthermore, by construction $w$ vanishes in $(\Omega_e)_T$ and hence the unique continuation principle for the fractional Laplacian guarantees $w=0$ in $\R^n_T$ (see \cite{GSU20}). As very weak solutions are distributional solutions, we get
		\[
		\int_0^T\langle F^\star,\Phi\rangle\,dt=\int_0^T \langle (L_{-\gamma}+q)\Phi,w\rangle_{L^2(\Omega)}\,dt=0
		\]
		for all $\Phi\in C_c^{\infty}(\Omega_T)$. To see that very weak solutions are distributional solutions, one can simply take $G=\chi_\Omega(L_\gamma+q)\Phi$ with $\Phi\in C_c^{\infty}(\Omega\times [0,T))$ in Definition \ref{def: very weak solutions}, where $\chi_\Omega$ denotes the characteristic function of $\Omega$ (see also \cite[Proposition 3.8]{Optimal-Runge-nonlocal-wave}). By density of  $C_c^{\infty}(\Omega_T)$ in $L^2(0,T;\widetilde{H}^s(\Omega))$ we deduce that $F=0$.
        This concludes the proof.
    \end{proof}

    As a consequence we have the following lemma.

    \begin{lemma}[Convergence of time derivative]
    \label{lemma: convergence of time derivatives}
    Let $\Omega \subset\R^n$ be a bounded Lipschitz domain, $T>0$, $0<s<\alpha\leq 1$ and suppose that $1\leq p\leq \infty$ satisfies \eqref{eq: restrictions on p}. Assume that we have given coefficients $(\gamma,q)\in C^{0,\alpha}(\R^n)\times L^{p}(\Omega)$. Let $\Phi,\Psi\in L^2(0,T;\widetilde{H}^s(\Omega))\cap H^1(0,T;H^{-s}(\Omega))$ and suppose $(\varphi_k)_{k\in\N}\subset C_c^{\infty}((\Omega_e)_T)$ is such that
    \begin{equation}
    \label{eq: convergence assertion for time derivative}
        u_k-\varphi_k\to \Phi\text{ in }L^2(0,T;\widetilde{H}^s(\Omega))\text{ as }k\to\infty,
    \end{equation}
    where $u_k$ solves 
    \begin{equation}
         \label{eq: approx time derivative}
            \begin{cases}
         (L_{\gamma}+q)u = 0 & \text{ in } \Omega_T,\\
        u =\varphi_k & \text{ on } (\Omega_e)_T,\\
          u(0) = 0,\, \partial_{t}u(0) = 0 & \text{ on } \Omega
    \end{cases}
    \end{equation}
    for $k\in\N$. If $\Phi,\Psi$ satisfy one of the conditions
        \begin{enumerate}[(a)]
            \item \label{item cond 1 first} $\Psi(T)=\Phi(0)=0$
            \item \label{item cond 2 first} or $\Psi(T)=\Psi(0)=0$,
        \end{enumerate}
        then we have
        \begin{equation}
        \label{eq: first order limit}
            \lim_{k\to\infty}\int_0^T\langle \partial_t (u_k-\varphi_k),\Psi\rangle \,dt =\int_0^T\langle \partial_t\Phi,\Psi\rangle\,dt.
        \end{equation}
    \end{lemma}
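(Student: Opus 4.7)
My approach would be to integrate by parts in time on both sides of \eqref{eq: first order limit}, moving the $\partial_t$ off of $u_k - \varphi_k$ (respectively $\Phi$) and onto $\Psi$; once this is done, the strong $L^2(0,T;\widetilde{H}^s(\Omega))$-convergence in \eqref{eq: convergence assertion for time derivative} together with $\partial_t \Psi \in L^2(0,T;H^{-s}(\Omega))$ will immediately allow the passage to the limit, provided that the boundary terms produced by the integration by parts vanish.

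To justify the integration by parts, I would first observe that by Proposition \ref{prop: Weak solutions to inhomogeneous DNWEQ} and $\varphi_k \in C_c^\infty((\Omega_e)_T)$ the function $u_k - \varphi_k$ lies in $L^2(0,T;\widetilde{H}^s(\Omega)) \cap H^1(0,T;\widetilde{L}^2(\Omega))$, and in particular in $H^1(0,T;H^{-s}(\Omega))$; the functions $\Phi$ and $\Psi$ enjoy the same integrability by hypothesis. All three therefore belong to
\[
\mathcal{W} \vcentcolon = L^2(0,T;\widetilde{H}^s(\Omega)) \cap H^1(0,T;H^{-s}(\Omega)),
\]
which by the classical Lions--Magenes theorem for the Gelfand triple $\widetilde{H}^s(\Omega) \hookrightarrow \widetilde{L}^2(\Omega) \hookrightarrow H^{-s}(\Omega)$ embeds continuously into $C([0,T];\widetilde{L}^2(\Omega))$ and satisfies the product rule
\[
\frac{d}{dt}\langle u,v\rangle_{L^2(\Omega)} = \langle \partial_t u, v\rangle + \langle u, \partial_t v\rangle
\]
for all $u,v \in \mathcal{W}$, with the right-hand side interpreted via $H^{-s}(\Omega)$--$\widetilde{H}^s(\Omega)$ duality.

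Integrating this identity from $0$ to $T$ with the pairs $(u_k-\varphi_k, \Psi)$ and $(\Phi,\Psi)$ then produces
\[
\int_0^T \langle \partial_t(u_k - \varphi_k),\Psi\rangle\,dt = \langle (u_k-\varphi_k)(T),\Psi(T)\rangle_{L^2(\Omega)} - \langle (u_k-\varphi_k)(0),\Psi(0)\rangle_{L^2(\Omega)} - \int_0^T \langle u_k-\varphi_k,\partial_t\Psi\rangle\,dt
\]
and the analogous identity with $\Phi$ in place of $u_k-\varphi_k$. In the first identity, the $t=0$ boundary term vanishes because $u_k(0)=\varphi_k(0)=0$, and the $t=T$ term reduces to $\langle u_k(T),\Psi(T)\rangle_{L^2(\Omega)}$ (since $\varphi_k(T)=0$), hence vanishes under either \ref{item cond 1 first} or \ref{item cond 2 first} because both assume $\Psi(T)=0$. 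In the $\Phi$ identity, the boundary terms vanish in case \ref{item cond 1 first} via $\Psi(T)=\Phi(0)=0$ and in case \ref{item cond 2 first} via $\Psi(T)=\Psi(0)=0$. What remains is then
\[
\int_0^T \langle \partial_t(u_k - \varphi_k),\Psi\rangle\,dt = -\int_0^T \langle u_k-\varphi_k,\partial_t\Psi\rangle\,dt \quad\text{and}\quad \int_0^T \langle \partial_t\Phi,\Psi\rangle\,dt = -\int_0^T \langle \Phi,\partial_t\Psi\rangle\,dt,
\]
so \eqref{eq: first order limit} follows by passing to the limit on the right-hand side of the $k$-dependent identity using \eqref{eq: convergence assertion for time derivative} together with the continuity of the duality pairing between $L^2(0,T;\widetilde{H}^s(\Omega))$ and $L^2(0,T;H^{-s}(\Omega))$. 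The only mildly delicate ingredient is the Lions--Magenes product rule and the corresponding embedding into $C([0,T];\widetilde{L}^2(\Omega))$, but this is entirely standard for our Gelfand triple and I do not expect it to constitute a serious obstacle.
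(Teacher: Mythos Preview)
Your proposal is correct and follows essentially the same approach as the paper: integrate by parts in time to transfer $\partial_t$ onto $\Psi$, use the vanishing initial data of $u_k-\varphi_k$ and the hypothesis $\Psi(T)=0$ to kill the boundary terms, pass to the limit via \eqref{eq: convergence assertion for time derivative}, and then integrate by parts once more on the $\Phi$ side using condition \ref{item cond 1 first} or \ref{item cond 2 first}. Your write-up is in fact slightly more careful than the paper's, since you explicitly invoke the Lions--Magenes embedding and product rule for the Gelfand triple to justify the integration by parts, whereas the paper simply cites ``the integration by parts formula''.
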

    \begin{remark}
        Let us note that the same formula \eqref{eq: first order limit} holds for second order time derivatives under appropriate conditions.
    \end{remark}

    \begin{proof}
      Using the integration by parts formula, we may compute
        \[\small
        \begin{split}
             &\lim_{k\to\infty}\int_0^T\langle \partial_t (u_k-\varphi_k),\Psi\rangle \,dt \\
             &= \lim_{k\to\infty}\left(\langle (u_k-\varphi_k)(T),\Psi(T)\rangle_{L^2(\Omega)}-\langle (u_k-\varphi_k)(0),\Psi(0)\rangle_{L^2(\Omega)}-\int_0^T\langle  \partial_t\Psi,u_k-\varphi_k\rangle \,dt\right)\\
             &=-\lim_{k\to\infty}\int_0^T\langle  \partial_t\Psi,u_k-\varphi_k\rangle \,dt\\
             &=-\int_0^T \langle \partial_t \Psi,\Phi\rangle \, dt\\
             &=\langle \Phi(0),\Psi(0)\rangle_{L^2(\Omega)}-\langle \Phi(T),\Psi(T)\rangle_{L^2(\Omega)}+\int_0^T \langle \partial_t\Phi, \Psi\rangle \, dt\\
             &=\int_0^T \langle \partial_t\Phi, \Psi\rangle \, dt.
        \end{split}
        \]
        In the first equality sign we used an integration by parts, in the second equality we used \eqref{eq: approx time derivative}, $\Psi(T)=0$ and \eqref{eq: approx time derivative}, in the third equality the convergence \eqref{eq: convergence assertion for time derivative}, in the fourth equality again an integration by parts and finally in the last equality the conditions \ref{item cond 1 first} or \ref{item cond 2 first}. 
    \end{proof}

    \subsection{DN map and integral identities}
    \label{sec: integral identity}    

    Next, we define the \emph{Dirichlet to Neumann (DN) map} $\Lambda_{\gamma,q}$ related to 
    \begin{equation}
    \label{eq: PDE for integral identity}
         \begin{cases}
         (L_{\gamma}+q)u = 0 & \text{ in } \Omega_T,\\
        u =\varphi & \text{ on } (\Omega_e)_T,\\
          u(0) = 0,\, \partial_{t}u(0) = 0 & \text{ on } \Omega,
    \end{cases}
    \end{equation}
    via
    \begin{equation}
    \label{eq: DN map}
        \langle \Lambda_{\gamma,q}\varphi,\psi\rangle=\int_{\R^n_T}(-\Delta)^{s/2} u_\varphi (-\Delta)^{s/2}\psi\,dx
    \end{equation}
    for all $\varphi,\psi\in C_c^{\infty}((\Omega_e)_T)$, where $u_\varphi$ is the unique solution to \eqref{eq: PDE for integral identity} with exterior condition $\varphi$. Using the above preparation, we now establish the following integral identity.
    
    \begin{proposition}[Integral identity for linear perturbations]
    \label{prop: integral identity}
        Let $\Omega \subset\R^n$ be a bounded Lipschitz domain, $T>0$, $0<s<\alpha\leq 1$ and suppose that $1\leq p\leq \infty$ satisfies \eqref{eq: restrictions on p}. Assume that we have given coefficients $(\gamma_j,q_j)\in C^{0,\alpha}(\R^n)\times L^{p}(\Omega)$ for $j=1,2$. Let $\varphi_j\in C_c^{\infty}((\Omega_e)_T)$ and denote by $u_j$ the corresponding solution of \eqref{eq: PDE for integral identity} with $(\gamma,q)=(\gamma_j,q_j)$. Then there holds
       \begin{equation}
       \label{eq: integral identity}
       \begin{split}
           &\langle (\Lambda_{\gamma_1,q_1}-\Lambda_{\gamma_2,q_2})\varphi_1,\varphi_2^\star\rangle\\
           &\quad =\int_{\Omega_T}\{[(\gamma_1-\gamma_2)\partial_t+q_1-q_2](u_1-\varphi_1)\}(u_2-\varphi_2)^\star\,dxdt.
       \end{split}
       \end{equation}
    \end{proposition}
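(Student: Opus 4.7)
The plan is to reduce \eqref{eq: integral identity} to a testing identity between the two systems, via an auxiliary solution and the time-reversal from Section~\ref{subsec: weak solutions}.

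\textbf{Setup.} I decompose $u_j = w_j + \varphi_j$ with $w_j \vcentcolon = u_j - \varphi_j$; since $\varphi_j \in C_c^{\infty}((\Omega_e)_T)$ vanishes in $\Omega_T$, Theorem~\ref{thm: weak sol to hom DNWEQ} yields $w_j \in C([0,T];\widetilde{H}^s(\Omega)) \cap C^1([0,T];\widetilde{L}^2(\Omega))$ solving $(L_{\gamma_j}+q_j)w_j = -(-\Delta)^s \varphi_j$ in $\Omega_T$ with vanishing initial data. I introduce the auxiliary solution $\widetilde{u}_1 = \widetilde{w}_1 + \varphi_1$ of \eqref{eq: PDE for integral identity} with coefficients $(\gamma_2,q_2)$ and exterior data $\varphi_1$, so that $(L_{\gamma_2}+q_2)\widetilde{w}_1 = -(-\Delta)^s \varphi_1$ in $\Omega_T$. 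By Lemma~\ref{lemma: time reversal of solution}, the time-reversal $w_2^\star$ solves
\[
(L_{-\gamma_2}+q_2)w_2^\star = -(-\Delta)^s \varphi_2^\star \text{ in } \Omega_T,\quad w_2^\star = 0 \text{ on } (\Omega_e)_T,\quad w_2^\star(T) = \partial_t w_2^\star(T) = 0.
\]
Because $u_1$ and $\widetilde{u}_1$ share the exterior data $\varphi_1$, the difference $u_1 - \widetilde{u}_1 = w_1 - \widetilde{w}_1$ lies in $\widetilde{H}^s(\Omega)$ for each $t$, and definition \eqref{eq: DN map} gives
\[
\langle (\Lambda_{\gamma_1,q_1} - \Lambda_{\gamma_2,q_2})\varphi_1, \varphi_2^\star\rangle = \int_{\R^n_T}(-\Delta)^{s/2}(w_1 - \widetilde{w}_1)(-\Delta)^{s/2}\varphi_2^\star\,dxdt.
\]

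\textbf{Two testing identities.} I next evaluate each piece on the right by pairing with $w_2^\star$. From $(L_{\gamma_2}+q_2)\widetilde{w}_1 = -(-\Delta)^s \varphi_1$, I obtain $\int_0^T \langle (L_{\gamma_2}+q_2)\widetilde{w}_1, w_2^\star\rangle\,dt = -\int_0^T \langle (-\Delta)^s \varphi_1, w_2^\star\rangle\,dt$; moving $L_{\gamma_2}$ to its formal adjoint $L_{-\gamma_2}$ acting on $w_2^\star$ by integration by parts in $t$---all initial/terminal contributions vanish by the homogeneous conditions above---and using self-adjointness of $(-\Delta)^s$ together with the PDE for $w_2^\star$ produces
\[
\int_{\R^n_T} (-\Delta)^{s/2}\widetilde{w}_1 \,(-\Delta)^{s/2}\varphi_2^\star\,dxdt = \int_{\R^n_T} (-\Delta)^{s/2}\varphi_1 \,(-\Delta)^{s/2}w_2^\star\,dxdt.
\]
The analogous computation for $w_1$ starts from the rewriting $(L_{\gamma_2}+q_2)w_1 = -(-\Delta)^s \varphi_1 - [(\gamma_1-\gamma_2)\partial_t + (q_1-q_2)]w_1$ and yields the same identity but with an additional term $\int_0^T \langle [(\gamma_1-\gamma_2)\partial_t + (q_1-q_2)]w_1, w_2^\star\rangle\,dt$ on the right. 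Subtracting the two, the common $\varphi_1$-and-$w_2^\star$ contribution cancels, and feeding the result into the setup display, together with $w_1 = u_1 - \varphi_1$ and $w_2^\star = (u_2-\varphi_2)^\star$, delivers precisely \eqref{eq: integral identity}.

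\textbf{Main obstacle.} The principal technical point will be the rigorous justification of the time-integration by parts, since $\partial_t^2 w_j$ is only a distribution. I plan to handle this either by invoking the weak formulation \eqref{eq: weak formulation damped nonlocal wave eq} directly with the time-dependent test function $w_2^\star(t) \in \widetilde{H}^s(\Omega)$ and integrating in $t$, or by parabolically regularizing $w_2^\star$ in the spirit of \eqref{eq: regularization for equation of w}--\eqref{eq: convergence} from the proof of Theorem~\ref{thm: very weak sol DNWEQ} and passing to the limit. The $C^1([0,T];\widetilde{L}^2(\Omega))$ regularity of $w_1$, $\widetilde{w}_1$, $w_2^\star$, the $L^\infty$ bound on $\gamma_j$, and estimate \eqref{eq: L2 estimate potential} for $q_j \in L^p(\Omega)$ make the product-rule and duality manipulations unambiguous, while the prescribed homogeneous initial and terminal conditions annihilate every $t$-boundary term.
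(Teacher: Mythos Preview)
Your proposal is correct and complete in outline; the technical obstacle you flag (the $\partial_t^2$ integration by parts) is indeed the only delicate point, and your two suggested remedies---either working from the weak formulation \eqref{eq: weak formulation damped nonlocal wave eq} or regularizing as in \eqref{eq: regularization for equation of w}--\eqref{eq: convergence}---both work.

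Your route differs from the paper's in one structural respect. The paper does not introduce the auxiliary solution $\widetilde{u}_1$; instead it runs a single symmetric computation with generic coefficients $(\Gamma_j,Q_j)$, expanding the right-hand side of \eqref{eq: integral identity} directly and adding/subtracting the free operator $\partial_t^2+(-\Delta)^s$ to arrive at
\[
\langle \Lambda_{\Gamma_1,Q_1}\psi_1,\psi_2^\star\rangle-\langle \Lambda_{\Gamma_2,Q_2}\psi_2,\psi_1^\star\rangle.
\]
Specializing $\Gamma_1=\Gamma_2$, $Q_1=Q_2$ yields the self-adjointness relation $\langle \Lambda_{\gamma,q}\psi_1,\psi_2^\star\rangle=\langle \Lambda_{\gamma,q}\psi_2,\psi_1^\star\rangle$, which is then fed back in to convert the second term into $\langle \Lambda_{\gamma_2,q_2}\varphi_1,\varphi_2^\star\rangle$. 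Your Alessandrini-style argument via $\widetilde{u}_1$ sidesteps this detour: the DN difference $\Lambda_{\gamma_1,q_1}\varphi_1-\Lambda_{\gamma_2,q_2}\varphi_1$ appears immediately, and self-adjointness never enters. The trade-off is that the paper's computation produces the self-adjointness identity \eqref{eq: self-adjointness DN map} as a free byproduct, whereas your approach would need a separate (though equally short) argument if that identity were required elsewhere. Both arguments reduce to the same core step---justifying
\[
\int_0^T\langle (\partial_t^2+(-\Delta)^s)w,\,v^\star\rangle\,dt=\int_0^T\langle (\partial_t^2+(-\Delta)^s)v^\star,\,w\rangle\,dt
\]
for two solutions with matching vanishing initial/terminal data---so the analytic content is identical.
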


    \begin{proof}
        Let $(\Gamma_j,Q_j)\in C^{0,\alpha}(\R^n)\times L^p(\Omega)$, $j=1,2$, and suppose $U_j$ is the unique solutions of \eqref{eq: PDE for integral identity} with $(\gamma,q)=(\Gamma_j,Q_j)$ and exterior condition $\varphi=\psi_j$. Then we may compute
        \begin{equation}
        \label{eq: calculation for integral identity}
            \begin{split}
                &\int_{\Omega_T}\{[(\Gamma_1-\Gamma_2)\partial_t+Q_1-Q_2](U_1-\psi_1)\}(U_2-\psi_2)^\star\,dxdt\\
                & =\int_{\Omega_T}\{[\Gamma_1\partial_t+Q_1](U_1-\psi_1)\}(U_2-\psi_2)^\star\,dxdt\\
                &\quad -\int_{\Omega_T}(U_1-\psi_1)[- \Gamma_2\partial_t+Q_2](U_2-\psi_2)^\star\,dxdt\\
                &=\int_{0}^T\langle L_{\Gamma_1,Q_1}(U_1-\psi_1),(U_2-\psi_2)^\star\rangle\,dt\\
                &\quad -\int_{0}^T\langle (\partial_t^2+(-\Delta)^s)(U_1-\psi_1),(U_2-\psi_2)^\star\rangle\,dt\\
                &\quad -\int_{0}^T\langle L_{-\Gamma_2,Q_2}(U_2-\psi_2)^\star,(U_1-\psi_1)\rangle\,dt\\
                &\quad +\int_{0}^T\langle (\partial_t^2+(-\Delta)^s)(U_2-\psi_2)^\star,(U_1-\psi_1)\rangle\,dt\\
                &=-\int_{0}^T\langle (-\Delta)^s\psi_1,(U_2-\psi_2)^\star\rangle_{L^2(\Omega)}\,dt +\int_{0}^T\langle (-\Delta)^s \psi_2^\star,(U_1-\psi_1)\rangle_{L^2(\Omega)}\,dt\\
                &=\int_{\R^n_T} ((-\Delta)^s\psi_2^\star) U_1\,dxdt-\int_{\R^n_T} ((-\Delta)^s\psi_1) U_2^\star\,dxdt\\
                &=\langle \Lambda_{\Gamma_1,Q_1}\psi_1,\psi_2^\star\rangle-\langle \Lambda_{\Gamma_2,Q_2}\psi_2,\psi_1^\star\rangle.
            \end{split}
        \end{equation}
        In the first equality we used that $U_1-\psi_1$ has vanishing initial conditions, $(U_2-\psi_2)^\star$ has vanishing terminal conditions and an integration by parts. In the third equality we used that the PDEs for $U_1-\psi_1$ and $(U_2-\psi_2)^\star$ hold in the sense of $L^2(0,T;H^{-s}(\Omega))=(L^2(0,T;\widetilde{H}^s(\Omega))'$ (see Lemma \ref{lemma: time reversal of solution}). In the fourth equality, we used the PDEs for $U_1$ and $U_2$, Lemma \ref{lemma: time reversal of solution} and that there holds
        \[
        \begin{split}
             &\int_{0}^T\langle (\partial_t^2+(-\Delta)^s)(U_2-\psi_2)^\star,(U_1-\psi_1)\rangle\,dt\\
             &=\int_{0}^T\langle (\partial_t^2+(-\Delta)^s)(U_1-\psi_1),(U_2-\psi_2)^\star\rangle\,dt,
        \end{split}
        \]
        which can be established similarly as \cite[Claim 4.2]{Semilinear-nonlocal-wave-eq} (see also the proof of Theorem \ref{thm: very weak sol DNWEQ}). In the last equality, we have made the change of variables $\tau=T-t$ for the second integral. On the one hand, using \eqref{eq: calculation for integral identity} with
        \[
            \Gamma_1=\Gamma_2=\gamma_j\text{ and }Q_1=Q_2=q_j,
        \]
        we observe that 
        \begin{equation}
        \label{eq: self-adjointness DN map}
            \langle \Lambda_{\gamma_j,q_j}\psi_1,\psi_2^\star\rangle=\langle \Lambda_{\gamma_j,q_j}\psi_2,\psi_1^\star\rangle
        \end{equation}
        for all $\psi_j\in C_c^{\infty}((\Omega_e)_T)$, $j=1,2$. On the other hand, choosing 
        \[
            \Gamma_j=\gamma_j,\,Q_j=q_j\text{ and }\psi_j=\varphi_j
        \]
        in \eqref{eq: calculation for integral identity} and taking into account the self-adjointness  \eqref{eq: self-adjointness DN map}, we get \eqref{eq: integral identity}.
        \end{proof}

    \subsection{Simultaneous determination of damping coefficient and linear perturbations}
    \label{sec: linear uniqueness}

    \begin{proof}[Proof of Theorem \ref{thm: uniqueness linear}]
        First note that by the integral identity in Proposition \ref{prop: integral identity}, we may deduce from the condition \eqref{eq: equality of DN maps} that there holds
        \begin{equation}
        \label{eq: uniqueness proof help identity}
        \int_{\Omega_T}\{[(\gamma_1-\gamma_2)\partial_t+q_1-q_2](u_1-\varphi_1)\}(u_2-\varphi_2)^\star\,dxdt=0
        \end{equation}
        for all $\varphi_j\in C_c^{\infty}((W_j)_T)$, where $u_j$ denotes the unique solution to 
        \begin{equation}
        \label{eq: PDE uniqueness proof}
            \begin{cases}
         (L_{\gamma_j}+q_j)u = 0 & \text{ in } \Omega_T,\\
        u =\varphi_j & \text{ on } (\Omega_e)_T,\\
          u(0) = 0,\, \partial_{t}u(0) = 0 & \text{ on } \Omega.
    \end{cases}
        \end{equation}
        Let $\omega\Subset \Omega$ and choose a cutoff function $\Phi_1\in C_c^{\infty}(\Omega)$ satisfying $\Phi_1=1$ on $\omega$. Moreover, let $\Phi_2\in C_c^{\infty}(\omega_T)$. By the Runge approximation (Theorem \ref{thm: Runge approx}), there exist sequences $(\varphi_j^k)_{k\in\N}\subset C_c^{\infty}((W_j)_T)$ with corresponding solutions $u_j^k$ of \eqref{eq: PDE uniqueness proof} with $\varphi_j=\varphi_j^k$ such that $u_j^k-\varphi_j^k\to \Phi_j$ in $L^2(0,T;\widetilde{H}^s(\Omega))$. 
        Taking $\varphi_1=\varphi_1^k$ and $\varphi_2=\varphi_2^{\ell}$ in \eqref{eq: uniqueness proof help identity} gives
        \[
            \int_{\Omega_T}\{[(\gamma_1-\gamma_2)\partial_t+q_1-q_2](u^k_1-\varphi^k_1)\}(u^{\ell}_2-\varphi^{\ell}_2)^\star\,dxdt=0
        \]
        for all $k,\ell\in\N$. First, we let $\ell\to\infty$ to deduce
        \begin{equation}
         \label{eq: uniqueness proof help identity 2}
            \int_{\Omega_T}\{[(\gamma_1-\gamma_2)\partial_t+q_1-q_2](u^k_1-\varphi^k_1)\}\Phi_2^\star\,dxdt=0
        \end{equation}
        for all $k\in\N$. As $\gamma_1-\gamma_2\in C^{0,\alpha}(\R^n)$ the estimate \eqref{eq: multiplication by gamma} ensures that we can apply Lemma \ref{lemma: convergence of time derivatives} under the condition \ref{item cond 2 first} and so $\partial_t\Phi_1=0$ shows that the first term in \eqref{eq: uniqueness proof help identity 2} goes to zero. So in the limit $k\to\infty$ what remains is
        \[
            \int_{\Omega_T}(q_1-q_2)\Phi_2^\star \,dxdt=0,
        \]
        where we used $\Phi_1=1$ on $\omega$. This ensures that $q_1=q_2$ on $\omega$. As the set $\omega$ is arbitrary, we get $q_1=q_2$ in $\Omega$. Now, the identity \eqref{eq: uniqueness proof help identity} reduces to
        \[
        \int_{\Omega_T}\{[(\gamma_1-\gamma_2)\partial_t](u_1-\varphi_1)\}(u_2-\varphi_2)^\star\,dxdt=0
        \]
        for all $\varphi_j\in C_c^{\infty}((W_j)_T)$. We choose $\eta\in C_c^{\infty}(\Omega_T)$, define
        \[
            \Phi_1(x,t)=\int_0^t\eta(x,\tau)\,d\tau\in C_c^{\infty}(\Omega\times (0,T])
        \]
        and take $\Phi_2\in C_c^{\infty}(\Omega_T)$. Then using $\partial_t \Phi_1=\eta$ and arguing as above via a Runge approximation and Lemma \ref{lemma: convergence of time derivatives}, we get from \eqref{eq: uniqueness proof help identity} the identity
        \[
            \int_{\Omega_T}(\gamma_1-\gamma_2)\eta\Phi_2^\star\,dxdt=0.
        \]
        This again implies $\gamma_1=\gamma_2$ in $\Omega$.
    \end{proof}

    \subsection{Simultaneous determination of damping coefficient and nonlinearity}
    \label{sec: semilinear uniqueness}

    Before turning to the proof of our second main result, let us recall that the \emph{DN map} related to the problem
    \begin{equation}
         \label{eq: PDE for semilinear problem}
         \begin{cases}
         L_{\gamma}u+f(u) = 0 & \text{ in } \Omega_T,\\
        u =\varphi & \text{ on } (\Omega_e)_T,\\
          u(0) = 0,\, \partial_{t}u(0) = 0 & \text{ on } \Omega
    \end{cases}
    \end{equation}
    is defined by
    \begin{equation}
        \langle \Lambda_{\gamma,f}\varphi,\psi\rangle\vcentcolon = \int_{\R^n_T}(-\Delta)^{s/2}u_\varphi(-\Delta)^{s/2}\psi\,dxdt,
    \end{equation}
    where $\varphi,\psi\in C_c^{\infty}((\Omega_e)_T)$ and $u_\varphi$ is the unique solution to \eqref{eq: PDE for semilinear problem} (see \cite[Proposition 3.7]{Semilinear-nonlocal-wave-eq}).

    \begin{proof}[Proof of Theorem \ref{thm: uniqueness semilinear}]
        Let $\eps>0$ and denote by $u^{(j)}_\eps$ the unique solutions to  \eqref{eq: PDE for semilinear problem} with $f=f_j$, $\gamma=\gamma_j$ and $\varphi=\eps \eta$ for some fixed $\eta\in C_c^{\infty}((W_1)_T)$. Let us observe that the UCP for the fractional Laplacian and the condition \eqref{eq: equality of DN maps semilinear} imply that $u_\eps\vcentcolon = u^{(1)}_\eps=u^{(2)}_\eps$. Next, let us note that we can write
        \begin{equation}
        \label{eq: decomposition of u eps}
            u_\eps=\eps v_j+R^{(j)}_\eps
        \end{equation}
        for $j=1,2$, where $v_j$ and $R^{(j)}_\eps$ are the unique solutions of
        \begin{equation}
        \label{eq: linear part of u eps}
            \begin{cases}
         L_{\gamma_j}v = 0 & \text{ in } \Omega_T,\\
        v =\eta & \text{ on } (\Omega_e)_T,\\
          v(0) = 0,\, \partial_{t}v(0) = 0 & \text{ on } \Omega
    \end{cases}
        \end{equation}
        and
        \begin{equation}
        \label{eq: nonlinear part of u eps}
              \begin{cases}
         L_{\gamma_j}R = -f_j(u_\eps) & \text{ in } \Omega_T,\\
        R =0 & \text{ on } (\Omega_e)_T,\\
          R(0) = 0,\, \partial_{t}R(0) = 0 & \text{ on } \Omega,
    \end{cases}
        \end{equation}
        respectively. This simply follows from the unique solvability of \eqref{eq: PDE for semilinear problem} and both functions $u_\eps$ and $\eps v_j+R^{(j)}_\eps$ are solutions. Furthermore, we notice that the energy estimate of \cite[Theorem 3.1]{Semilinear-nonlocal-wave-eq}, \cite[eq.~(3.18)]{Semilinear-nonlocal-wave-eq} and the $r+1$ homogeneity of $f_j$ ensure that $R^{(j)}_\eps$ satisfies
        \begin{equation}
        \label{eq: energy estimate remainder}
        \begin{split}
             \|\partial_t R^{(j)}_\eps\|_{L^{\infty}(0,T;L^2(\Omega))}+\|R^{(j)}_\eps (t)\|_{L^{\infty}(0,T;H^s(\R^n))}&\lesssim \|f_j(u_\eps)\|_{L^2(\Omega_T)}\\
             &\lesssim \|u_\eps\|^{r+1}_{L^{\infty}(0,T;H^s(\R^n))}.
        \end{split}
        \end{equation}
        Moreover, we may estimate
        \begin{equation}
        \label{eq: estimate u eps}
        \begin{split}
             &\|\partial_t u_\eps\|_{L^{\infty}(0,T;L^2(\R^n))}+\|u_\eps\|_{L^{\infty}(0,T;H^s(\R^n))}\\
             &\lesssim \|\partial_t (u_\eps-\eps\eta)\|_{L^{\infty}(0,T;L^2(\Omega))}+\|u_\eps-\eps\eta\|_{L^{\infty}(0,T;H^s(\R^n))}+\eps\|\eta\|_{W^{1,\infty}(0,T;H^{2s}(\R^n))}\\
             &\lesssim \eps\|\eta\|_{W^{1,\infty}(0,T;H^{2s}(\R^n))}.
        \end{split}
        \end{equation}
        This follows from the following observations. If $u$ solves  \eqref{eq: PDE for semilinear problem} for a damping coefficient $\gamma\in C^{0,\alpha}(\R^n)$, a weak nonlinearity $f$ and $\varphi\in C_c^{\infty}((\Omega_e)_T)$, then $v=u-\varphi$ solves
        \begin{equation}
         \begin{cases}
         L_{\gamma}v+f(v) = -(-\Delta)^s\varphi & \text{ in } \Omega_T,\\
        v =0& \text{ on } (\Omega_e)_T,\\
          v(0) = 0,\, \partial_{t}v(0) = 0 & \text{ on } \Omega.
         \end{cases}
         \end{equation}
        Now, we may invoke \cite[eq.~(3.15)]{Semilinear-nonlocal-wave-eq} to find that there holds
        \[
            \begin{split}
                &\|\partial_t v(t)\|_{L^2(\Omega)}^2+\|v(t)\|_{H^s(\R^n)}^2\\
                &\lesssim \int_0^t|\langle \gamma\partial_t v,\partial_t v\rangle_{L^2(\Omega)}|\,d\tau +\int_0^t|\langle (-\Delta)^{s}\varphi,\partial_t v\rangle_{L^2(\Omega)}|\,d\tau\\
                &\lesssim  \|(-\Delta)^{s}\varphi\|_{L^2(0,t;L^2(\Omega))}^2+\int_0^t\|\partial_t v\|_{L^2(\Omega)}^2\,d\tau.
            \end{split}
        \]
        Thus, Gronwall's inequality gives
        \[
         \|\partial_t v(t)\|_{L^2(\Omega)}+\|v(t)\|_{H^s(\R^n)}\lesssim  \|(-\Delta)^{s}\varphi\|_{L^2(0,t;L^2(\Omega))}.
        \]
        This ensures the validity of the second estimate in \eqref{eq: estimate u eps}.
        Next, observe that by subtracting the PDEs for $u^{(1)}_\eps$ and $u^{(2)}_\eps$, we deduce that
        \begin{equation}
        \label{eq: condition for uniqueness semilinear}
            (\gamma_1-\gamma_2)\partial_t u_\eps=f_2(u_\eps)-f_1(u_\eps)\text{ in }\Omega_T.
        \end{equation}
        By \eqref{eq: decomposition of u eps}, we may write
        \begin{equation}
        \label{eq: condition for uniqueness semilinear 2}
         (\gamma_1-\gamma_2)(\eps\partial_tv_1+\partial_t R^{(1)}_\eps) =f_2(u_\eps)-f_1(u_\eps)\text{ in }\Omega_T.
        \end{equation}
        Combining \eqref{eq: energy estimate remainder} and \eqref{eq: estimate u eps}, we see that
        \begin{equation}
        \label{eq: decay estimate R eps}
            \|\partial_t R^{(j)}_\eps\|_{L^{\infty}(0,T;L^2(\Omega))}+\|R^{(j)}_\eps (t)\|_{L^{\infty}(0,T;H^s(\R^n))}\lesssim \eps^{r+1}.
        \end{equation}
        Multiplying \label{eq: condition for uniqueness semilinear 2} by $\eps^{-1}$ gives
         \begin{equation}
        \label{eq: condition for uniqueness semilinear 3}
        \begin{split}
            &(\gamma_1-\gamma_2)(\partial_tv_1+\eps^{-1}\partial_t R^{(1)}_\eps) =f_2(\eps^{-1/(r+1)}u_\eps)-f_1(\eps^{-1/(r+1)}u_\eps)\text{ in }\Omega_T.
        \end{split}
         \end{equation}
         Next, let us focus one the case $2s<n$ as the other one can be treated similarly. As $r>0$ we deduce from \eqref{eq: estimate u eps} that $\eps^{-1/(1+r)}u_\eps\to 0$ in $L^{\infty}(0,T;H^s(\R^n))$ and so by Sobolev's embedding in $L^q(0,T;L^{2_s^*}(\Omega))$ for all $1\leq q\leq \infty$ and $2_s^*=\frac{2n}{n-2s}$. Hence, by our assumptions on $f_j$ and \cite[Lemma 3.6]{zimmermann2024calderon}, we get
         \begin{equation}
         \label{eq: zero convergence nonlinearity}
            f_j(\eps^{-1/(r+1)}u_\eps)\to 0\text{ in }L^{q/(r+1)}(0,T;L^{2_s^*/(r+1)}(\Omega))
         \end{equation}
         for all $q\geq r+1$ as $\eps\to 0$. Additionally, using \eqref{eq: decay estimate R eps} we know that
         \begin{equation}
         \label{eq: zero convergence time derivative remainder}
              \eps^{-1}\partial_t R^{(j)}_\eps\to 0\text{ in }L^{\infty}(0,T;L^2(\Omega)).
         \end{equation}
         Therefore, from \eqref{eq: condition for uniqueness semilinear 3}, \eqref{eq: zero convergence nonlinearity} and \eqref{eq: zero convergence time derivative remainder}, we infer
         \[
            (\gamma_1-\gamma_2)\partial_t v_1=0\text{ in }\Omega_T.
         \]
         In particular, this ensures that there holds
         \[
            \int_{\Omega_T}(\gamma_1-\gamma_2)\partial_t (v_1-\eta)(w_2-\psi)^\star\,dxdt=0
         \]
         for any $\psi \in C_c^{\infty}((W_2)_T)$, where $w_2$ is the unique solution of
         \begin{equation}
                 \begin{cases}
         L_{\gamma_2}w = 0 & \text{ in } \Omega_T,\\
        w =\psi & \text{ on } (\Omega_e)_T,\\
          w(0) = 0,\, \partial_{t}w(0) = 0 & \text{ on } \Omega.
    \end{cases}
         \end{equation}
         Now, arguing as in the previous section, we get $\gamma_1=\gamma_2$ in $\Omega$. Hence, \eqref{eq: condition for uniqueness semilinear} reduces to 
         \begin{equation}
              f_1(u_\eps)=f_2(u_\eps)\text{ in }\Omega_T.
         \end{equation}
         Multiplying this identity by $\eps^{-(r+1)}$ and arguing as before, we deduce that
         \[
            f_1(v)=f_2(v)\text{ in }\Omega_T,
         \]
         where $v\vcentcolon =v_1=v_2$ as $\gamma_1=\gamma_2$. One can now show $f_1(x,\tau)=f_2(x,\tau)$ for all $x\in\Omega$ and $\tau\in\R$ exactly as described in \cite[p.~29]{Optimal-Runge-nonlocal-wave}. Hence, we can conclude the proof.
    \end{proof}



	
	\medskip 
	
	\noindent\textbf{Acknowledgments.} 
        P.~Zimmermann was supported by the Swiss National Science Foundation (SNSF), under grant number 214500.

    \section*{Statements and Declarations}
	
	\subsection*{Data availability statement}
	No datasets were generated or analyzed during the current study.
	
	\subsection*{Conflict of Interests} Hereby we declare there are no conflict of interests.

	\bibliography{refs} 
	
	\bibliographystyle{alpha}
	
\end{document}